\newtheorem{thm}{\noindent Theorem}[section]
\newtheorem{lem}{\noindent Lemma}[section]
\newtheorem{prop}{\noindent Proposition}[section]
\newtheorem{rem}{\noindent Remark}[section]
\def\om{\omega}
\def\Om{\Omega}
\def\beq{ \begin{align*}}
\def\eeq{ \end{align*}}
\def\beqn{\begin{equation}}
\def\eeqn{\end{equation}}
\def\1{{\bf 1}}
\def\v2{\vskip2mm}
\def\tst12{{\textstyle\frac12}}
\begin{document}
\title{The inner boundary of random walk range}
\author{Izumi Okada}
\date{}

\keywords{random walk range, inner boundary, ergodic theorem, large deviation}
\amssubj{Primary 60J05, Secondary 60F10}
\maketitle
\begin{abstract}
In this paper, we deal with the inner boundary of random walk range, that is, the set of those points in a random walk range which have at least one neighbor site outside the range. If $L_n$ be the number of the inner boundary points of random walk range in the $n$ steps, we prove $\lim_{n\to \infty}\frac{L_n}{n}$ exists with probability one. Also, we obtain some large deviation result for  transient walk. We find that the expectation of the number of the inner boundary points of simple random walk on two dimensional square lattice is of the same order as  $\frac{n}{(\log n)^2}$. 
\end{abstract}
\section{Introduction and Known results }
Let  $d$ be a positive integer and  $X_1,X_2,...$ be i.i.d. ${\mathbb{Z}^{d}}$-valued random variables, and put $S_k=S_0+\sum_{i=1}^{k}X_i$ with some constant  $S_0$, a random walk taking values in ${\mathbb{Z}^{d}}$ started from $S_0$. 
Let $P^a$ denote the probability law of the walk such that $S_0=a$ a.s., and 
we simply write $P$ for $P^0$.  
Let $R_n$ be the cardinality of the range of the walk of length $n$. 
Namely, $R_n$ is the number of distinct points visited 
by the walk  in the first $n$ steps.
 Many results of the asymptotic behavior of $R_n$ as $n\to \infty$ have been obtained  by various authors. 
It was shown by Spitzer \cite{SP}, pp $38-40$ that 
for all random walks of any dimension,
 \begin{align*}
\lim_{n \to \infty}\frac{R_n}{n} =v\quad a.s.
 \end{align*}
where $v= P(0\notin \{S_k\}_{k=1}^{\infty})$.  
For any random walk in $\mathbb{Z}^{d}$ with $d\ge4$ the following was shown by  
 Jain and Pruitt \cite{Jain2}: 
  \begin{align*}
&\mathrm{Var} R_n \sim c^2 n,\\
&\frac{R_n-vn}{\sqrt{n}}\to c{\cal N}, 
 \end{align*}
where $c$ is some positive constant, ${\cal N}$ is the standard normal distribution,  
and the convergence is in the sence of distribution. 
For random walk in $\mathbb{Z}^{3}$ with mean $0$ and finite variance which satisfies 
the aperiodic condition:  
 \begin{align}\label{group}
\text{the group generated by the support of }X \text{ is all of }\mathbb{Z}^{d}, 
 \end{align}
the following results are shown by Jain and Pruitt \cite{Jain2, Jain4}:
 \begin{align*}
&E[(R_n-ER_n)^4]=O(n^2(\log n)^2),\\
&\mathrm{Var} R_n \sim cn \log n,\\
&\frac{R_n-vn}{\sqrt{n\log n}} \to c{\cal N},
 \end{align*}
where $c$ is a positive constant. 
The Law of large numbers for simple random walk in $\mathbb{Z}^{d}$ with $d\ge2$ has already shown by Dvoretzky and Erd\H{o}s \cite{Dvo1}. But an error in \cite{Dvo1} about $d=2$ was corrected by \cite{Jain1}. 
Also, for random walk in $\mathbb{Z}^{2}$ with mean $0$ and finite variance which satisfies  (\ref{group}) it was shown by Jain and Pruitt \cite{Jain1,Jain3} 
and Le Gall \cite{gall}:
 \begin{align*}
&ER_n=\pi\frac{n}{\log n}+O(\frac{n}{(\log n)^2}),\\
&\mathrm{Var} R_n\sim \frac{cn^2}{(\log n)^4},\\
&\lim_{n \to \infty}\frac{R_n}{ER_n}=1\quad {a.s.,}\\
&\frac{(\log n)^2}{n}(R_n-ER_n)\to -4\pi^2(det\Theta)\gamma(l),
 \end{align*}
where c is a positive constant, $l=\{(t,t')\in {\mathbb{R}}^2; 0\le t' \le t \le1\}$ and $\gamma(l)$ is the renormalized self-intersection local time of a planar Brownian motion 􏵔$\{W_t\}_{t\ge0}$, which is expressed formally by
$$\int\int_l \delta_0(W_t-W_s)dsdt-E[\int\int_l \delta_0(W_t-W_s)dsdt],$$
and $\Theta$ is the symmetric matrix satisfying $E􏵖(\theta,X_1)^2=(\theta, \Theta^2 \theta)$ for any $\theta\in {\mathbb{R}}^2$, where 􏵖$(·􏵐, ·􏵗)$ is the standard inner product on ${\mathbb {R}}^2$. 
The large deviations of $R_n$ were studied by Donsker and Varadhan \cite{Don1} and Hamana and Kesten \cite{Hama2}. 
In \cite{Hama2} it was shown that for any random walk in $\mathbb{Z}^{d}$ with $d\ge2$ which satisfies 
(\ref{group})
 \begin{align*}
\psi_0(x):=\lim_{n \to \infty}\frac{-1}{n}\log P(R_n\ge nx)\quad \text{exists}
 \end{align*}
for all $x$, and $\psi_0(\cdot)$ has the following properties:
 \begin{align*}
&\psi_0(x)=0 \quad \text{for } x\le v,\\
&0<\psi_0(x)<\infty \quad \text{for } v < x\le 1,\\
&\psi_0(x)=\infty \quad \text{for } 1< x,\\
& \psi_0 \text{ is continuous on }x\in[0,1],\\
& \psi_0 \text{ is convex on }x\in[0,1],\text{ and }\\
& \psi_0 \text{ is strictly increasing on }x\in[v,1].
 \end{align*}
Next we describe the known result about the multiple points of random walk range. 
Let $Q_n^{(p)}$ the number of the strictly $p$-multiple points of random walk range in the $n$ steps. 
That is,
 \begin{align*}
Q_n^{(p)}=\sharp \{S_i:0 \le i\le n,\sharp\{m:0\le m \le n,S_m=S_i \}=p \},
 \end{align*}
where $\sharp A$ denotes the cardinality of $A$. 
It was shown by Pitt \cite{pitt} that for any random walk and $p\ge1$
 \begin{align*}
\lim_{n \to \infty}\frac{Q_n^{(p)}}{n}
=v^2 (1-v)^{p-1}\quad a.s.,
 \end{align*}
and by Flatto \cite{Fla} that for simple random walk in $\mathbb{Z}^{2}$
 \begin{align*}
\frac{(\log n)^2Q_n^{(p)}}{n}\to \pi^2 \quad a.s..
 \end{align*}
Also, it was shown by Hamana \cite{Hama4,Hama5} that for random walk in $\mathbb{Z}^{2}$ with mean $0$ and finite variance which satisfies (\ref{group}) 
 \begin{align*}
&\frac{(\log n)^3}{n}(Q_n^{(p)}-EQ_n^{(p)})\to -16\pi^3(det\Theta)^2\gamma(l),\\
&\mathrm{Var} Q_n^{(p)}\sim c\frac{n^2}{(\log n)^6},
 \end{align*}
where c is a positive constant which is independent of $p$. 
In this paper, we deal with the inner boundary of random walk range.  
Let $L_n$ be the number of the inner boundary points of random walk range in the $n$ steps (see the next section for the definition). 
The lower bound of the expectation of the number of the inner boundary points is known by \cite{itai} Lemma $5$.  More precisely, it was shown that 
for simple random walk in $\mathbb{Z}^{d}$ with $d\ge2$ there exists a constant $C_d>0$  
such that for all $n\ge 1$,
 \begin{align*}
&EL_n \ge \frac{C_2n}{(\log n)^2} \quad  d=2,\\
 &EL_n \ge C_dn\quad d\ge3.
 \end{align*}
In \cite{itai}, it is noticed that the entropy of random walk is essentially governed by the size of the boundary of the trace. 
In this paper, we consider the asymptotic behavior of number of the inner boundary points 
and obtain analogues for $L_n$ of those results that are mentioned above.  
\section{Framework and Main Results }
\subsection{Framework}
We consider the random walk in $\mathbb{Z}^{d}$ with $d\ge1$ described in the introduction. 
Let $z$, $a$, $a_i$ $i\ge0$ points in ${\mathbb{Z}^{d}}$. 
A neighbor of $a$ is a point $z$ that satisfies dist$(a,z)=1$. 
Let ${\cal N}(a)$ the set of all neighbors of $a$:
$${\cal N}(a)=\{z : \text{dist}(a,z)=1 \}.$$
The inner boundary of random walk range $\{S_m\}_{m=0}^n$, denoted by $H_n$,  
is defined  by 
$$H_n=\{S_i :0 \le i\le n,\{S_m\}_{m=0}^n \not\supset {\cal N} (S_i) \}.$$ 
Let $L_n=\sharp H_n$. 
Also, we define  
\begin{align*}
J_n^p=\sharp &\{S_i\in H_n:\sharp\{m:0\le m \le n,S_m=S_i \}\ge p \},\\
J_n^{(p)}=\sharp &\{S_i\in H_n:\sharp\{m:0\le m \le n,S_m=S_i \}=p \}.
 \end{align*}
\subsection{Main Results }
  For each $i\ge1$ let $\{S^i_m\}_{m=0}^{\infty}$ be an independent copy of $\{S_m\}_{m=0}^{\infty}$, and define 
$T_{a,i}=\inf \{m\ge 1: S^i_m =a \}$ and $T_{a}=\inf \{m\ge 1: S_m =a \}$, the corresponding passage times. 
Let $\{S'_m\}_{m=0}^{\infty}$ denote an independent dual walk of $\{S_m\}_{m=0}^{\infty}$, namely an independent copy of  $\{-S_m\}_{m=0}^{\infty}$. 

\begin{thm}\label{conv1}
For any random walk in $\mathbb{Z}^{d}$ with $d\ge1$,
 \begin{align*}
\lim_{n\to\infty}\frac{L_n}{n} = q\quad a.s.,
 \end{align*}
where $q=P(\{S_m\}_{m=0}^{\infty}\cup\{S'_m\}_{m=0}^{\infty} \not\supset {\cal N} (0)$ and $0\notin\{S_m\}_{m=1}^{\infty}.)$ .

\end{thm}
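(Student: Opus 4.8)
The plan is to prove the statement in two logically separate steps: first, to show by a subadditivity argument that $L_n/n$ converges almost surely to a deterministic constant $\gamma$; second, to identify $\gamma$ with $q$ by computing $\lim_{n\to\infty}E[L_n]/n$ directly. Kingman's subadditive ergodic theorem supplies the convergence to a constant, and a last-visit decomposition together with time reversal produces the dual walk appearing in the formula for $q$.

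For the first step, write $L_{s,t}$ for the number of inner boundary points of the segment $\{S_m\}_{m=s}^t$, so that $L_n=L_{0,n}$. The key observation is the inclusion $H_{0,n}\subseteq H_{0,m}\cup H_{m,n}$ for $0<m<n$: if $x\in H_{0,n}$ then $x$ is visited in at least one sub-segment, and any neighbor $y\in{\cal N}(x)$ missing from $\{S_j\}_{j=0}^n$ is a fortiori missing from each sub-segment, so $x$ lies in the corresponding inner boundary. This gives subadditivity $L_{0,n}\le L_{0,m}+L_{m,n}$. Since $L_{s,t}$ is a translation-invariant functional of the increments $X_{s+1},\dots,X_t$, the family $(L_{s,t})$ is stationary under the shift of the i.i.d.\ sequence $(X_i)$, that shift is ergodic, and $0\le L_{0,1}\le 2$ is integrable. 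Kingman's subadditive ergodic theorem then yields $L_n/n\to\gamma$ almost surely and in $L^1$, with $\gamma=\lim_{n\to\infty}E[L_n]/n=\inf_n E[L_n]/n$.

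For the second step I would decompose $L_n$ by last visits. Calling $i$ a last visit if $S_i\notin\{S_{i+1},\dots,S_n\}$, each point of $H_n$ is counted exactly once at its last visit time, so
\[
L_n=\sum_{i=0}^n\1\{S_i\notin\{S_{i+1},\dots,S_n\},\ {\cal N}(S_i)\not\subset\{S_m\}_{m=0}^n\};
\]
denote the event in the $i$-th summand by $A_i$. Fix $i$ and split the trajectory at time $i$ into the forward walk $\tilde S_j=S_{i+j}-S_i$ (increments $X_{i+1},X_{i+2},\dots$, distributed as $X$) and the time-reversed past $\hat S_j=S_{i-j}-S_i$ (increments $-X_i,-X_{i-1},\dots$, distributed as $-X$). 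These use disjoint blocks of the $X_k$ and hence are independent, and $\hat S$ is precisely a dual walk. The event $A_i$ becomes $\{0\notin\{\tilde S_m\}_{m=1}^{n-i}\}\cap\{{\cal N}(0)\not\subset\{\hat S_m\}_{m=0}^{i}\cup\{\tilde S_m\}_{m=0}^{n-i}\}$, which is decreasing in both $i$ and $n-i$, so $P(A_i)\to q$ as $\min(i,n-i)\to\infty$. A two-sided Ces\`aro argument, in which the $O(1)$ boundary indices with $i$ or $n-i$ small contribute $o(n)$, then gives $E[L_n]/n=\tfrac1n\sum_{i=0}^nP(A_i)\to q$, whence $\gamma=q$.

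The routine ingredients are the subadditivity inclusion and the appeal to Kingman's theorem; the technical heart, and the step I expect to demand the most care, is the identification of $\gamma$. There one must justify that the reversed past is genuinely an independent dual walk, using the conditional independence of past and future given $S_i$ together with the translation invariance of the inner-boundary functional, and then control the convergence $P(A_i)\to q$ uniformly enough to pass to the Ces\`aro average; the monotonicity of the defining event in $i$ and $n-i$ is what makes both the pointwise convergence and the averaging clean. A final consistency check is that the two conditions cutting out $q$, namely no forward return to $0$ and some neighbor of $0$ missed by the union of the walk and its dual, correspond exactly to the last-visit and inner-boundary requirements in the decomposition.
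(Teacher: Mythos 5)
Your proof is correct, but it takes a genuinely different route from the paper's. The paper extends the walk to a two-sided stationary sequence $\{Z_n\}_{n\in\mathbb{Z}}$ by gluing an independent dual walk onto negative times, uses the same last-exit representation of $L_n$ that you use, and then sandwiches $L_n$ between two Birkhoff sums: from below by $\sum_{m=0}^{n}1_A(\phi^m\omega)$, where $A$ is the infinite-horizon event defining $q$, and from above by $2k+\sum_{m=k}^{n-k}1_{A_k}(\phi^m\omega)$, where $A_k$ is the $k$-window truncation of $A$; since $\cap_{k\ge1}A_k=A$, the ergodic theorem plus $k\to\infty$ finishes in one stroke. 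You instead decouple convergence from identification: the inclusion $H_{0,n}\subseteq H_{0,m}\cup H_{m,n}$ (valid because a neighbor missed by the whole path is missed by each piece) gives a stationary, integrable, subadditive family, so Kingman yields almost sure and $L^1$ convergence to a constant $\gamma=\inf_n EL_n/n$, and you then identify $\gamma=q$ at the level of first moments, where time reversal at each last-visit time produces the independent dual walk and the monotonicity of $g(a,b)=P\bigl(0\notin\{\tilde S_m\}_{m=1}^{b},\,{\cal N}(0)\not\subset\{\hat S_m\}_{m=0}^{a}\cup\{\tilde S_m\}_{m=0}^{b}\bigr)$ in both window sizes makes the Ces\`aro limit clean; this identification step is in effect the paper's sandwich performed on expectations only, which is easier. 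Your route buys slightly more information ($L^1$ convergence, $EL_n/n\to q$, and $q=\inf_n EL_n/n$), while the paper's Birkhoff route buys robustness: it transfers essentially verbatim to $J_n^{(p)}$ and $J_n^{p}$ in Theorem \ref{conv1+} and to the generalization in the Remark, where your subadditivity fails (a point that is strictly $p$-multiple for the whole path need not be so for either piece, so no analogue of your inclusion holds). Two small points to make explicit in a write-up: realize the reversed pasts for all $i$ on a single infinite dual walk, so that $g(a,b)\downarrow q$ by continuity from above of the probability measure along the decreasing events; and record the two-sided bound $q\le P(A_i)\le g(k,k)$ for $k\le i\le n-k$, so the at most $2k$ boundary indices contribute at most $2k/n$ to the Ces\`aro average.
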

\begin{thm}\label{conv1+}
For any random walk in $\mathbb{Z}^{d}$ with $d\ge1$,
\begin{align*}
\lim_{n\to\infty}\frac{J_n^{(p)}}{n} = &P(\{S_m\}_{m=0}^{\infty}\cup\{S'_m\}_{m=0}^{\infty} \cup(\cup_{i=1}^{p-1}\{S^i_m\}_{m=0}^{T_{0,i}}) \not\supset {\cal N} (0), \\
&0\notin \{S_m\}_{m=1}^{\infty}\cup \{S'_m\}_{m=1}^{\infty}
 \text{ and }0\in \{S^i_m\}_{m=1}^{\infty}
\text{ for }i=1,..,p-1.)
\quad a.s.,\\
\lim_{n\to\infty}\frac{J_n^{p}}{n} = &P(\{S_m\}_{m=0}^{\infty}\cup\{S'_m\}_{m=0}^{\infty} \cup (\cup_{i=1}^{p-1}\{S^i_m\}_{m=0}^{T_{0,i}}) \not\supset {\cal N} (0),\\
&0\notin \{S_m\}_{m=1}^{\infty}
 \text{ and }0\in \{S^i_m\}_{m=1}^{\infty}  
\text{ for }i=1,..,p-1.)
\quad a.s..
\end{align*}
\end{thm}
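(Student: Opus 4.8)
The plan is to charge each qualifying point at its \emph{last} visit before time $n$ and to recognise the resulting sum as an ergodic average. Embed the walk in a two-sided i.i.d. increment sequence $(X_k)_{k\in\mathbb Z}$, write $S_k=\sum_{i=1}^{k}X_i$, and let $\theta$ be the (ergodic) shift; note $J_n^{p}$ depends only on $X_1,\dots,X_n$, so the fictitious past does not change it. For $0\le k\le n$ set
\[
V_k^{(n)}=\mathbf{1}[S_j\neq S_k,\ k<j\le n]\cdot\mathbf{1}[\sharp\{0\le j\le n:S_j=S_k\}\ge p]\cdot\mathbf{1}[S_k\in H_n],
\]
so that $J_n^{p}=\sum_{k=0}^{n}V_k^{(n)}$, each point of $H_n$ with at least $p$ visits being counted exactly once. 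Recentring at $S_k$ turns the future $\{S_{k+j}-S_k\}_{j\ge0}$ into a forward walk and the reversed past $\{S_{k-j}-S_k\}_{j\ge0}$ into an independent dual walk; the last-visit condition then reads ``forward walk never returns to $0$'', and since in that case all extra visits come from the past, the multiplicity condition becomes ``dual walk returns to $0$ at least $p-1$ times'', while the boundary condition says ${\cal N}(0)$ is not covered by the union of the two ranges.

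Let $G_k^{\infty}$ be the indicator of these same three events computed for the full two-sided walk. It has the form $G_0^{\infty}\circ\theta^{k}$, hence is stationary and ergodic, and Birkhoff's theorem gives $\frac1n\sum_{k=0}^{n}G_k^{\infty}\to E[G_0^{\infty}]$ a.s. Decomposing the dual walk into its successive excursions from $0$ by the strong Markov property—the first $p-1$ excursions being conditioned to return (the copies $S^i$ run up to $T_{0,i}$) and the remaining tail being a free dual walk $S'$—identifies $E[G_0^{\infty}]$ with exactly the probability asserted for $J_n^{p}$. For $p=1$ this is $q$, and the statement reduces to Theorem \ref{conv1}.

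It remains to prove $\frac1n\sum_{k=0}^{n}\mathbf{1}[V_k^{(n)}\neq G_k^{\infty}]\to0$ a.s. The two indicators can differ only through finite-horizon effects: the forward walk may first return in $(n,\infty)$ rather than never; a neighbour of $0$ may be visited only at a time outside $[0,n]$ (here $\mathbf{1}[\text{disagree}]\le\sum_{z\in{\cal N}(0)}\mathbf{1}[z\text{ is visited in }\mathbb Z\text{ but not within the observed window}]$, since the finite range is contained in the infinite one); or the $(p-1)$-st backward return may occur before time $0$. For $k$ at distance $\ge l$ from both endpoints each discrepancy is dominated by a \emph{stationary} indicator $\bar D_k^{\,l}$ recording whether the governing time is realised within distance $l$; for the multiplicity discrepancy, for example, $\mathbf{1}[\text{disagree}]\le\mathbf{1}[\sharp\{1\le j\le l:S_{k-j}=S_k\}<p-1\le\sharp\{j\ge1:S_{k-j}=S_k\}]$ once $k\ge l$. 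Applying Birkhoff to each $\bar D^{\,l}$ and absorbing the $O(l)$ endpoint indices yields $\limsup_n\frac1n\sum_{k}\mathbf{1}[V_k^{(n)}\neq G_k^{\infty}]\le\rho(l)$ a.s., where $\rho(l)=E[\bar D_0^{\,l}]\to0$ as $l\to\infty$ because the three governing times are a.s. finite on the events where they matter. Letting $l\to\infty$ gives the $J_n^{p}$ limit, and then $J_n^{(p)}=J_n^{p}-J_n^{p+1}$ together with the matching subtraction of the two limiting probabilities—the difference isolating ``exactly $p-1$ dual returns'', i.e. the free tail $S'$ conditioned not to return—produces the $J_n^{(p)}$ statement.

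The main obstacle is exactly this last passage. Because the multiplicity condition forces one to look back an unbounded number of steps to accumulate $p-1$ returns, $V_k^{(n)}$ is not a bounded-window functional, so the comparison with the stationary sum must be carried out almost surely rather than merely in $L^1$; the device of dominating each finite-horizon discrepancy by a truncated stationary indicator of vanishing mean is what legitimises the upgrade from Cesàro convergence of $E[J_n^{p}]/n$ to almost-sure convergence. Within this, the inner-boundary discrepancy is the most delicate to bookkeep, since there both the past and the future truncations interact simultaneously with all $2d$ neighbours of $0$.
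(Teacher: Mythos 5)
Your argument is correct, and its engine is the same as the paper's: both embed the walk in a two-sided stationary sequence, apply the ergodic theorem to shifts of a fixed event, and pass from the finite-horizon count to the stationary limit by a window truncation whose error vanishes as the window grows --- your domination of each discrepancy by a truncated stationary indicator $\bar D^{\,l}_k$ of mean $\rho(l)\to 0$ is functionally the paper's two-sided sandwich (events with inter-visit gaps at most $k$ from below, $2k+\sum 1_{\tilde{G}_{h,k}}(\phi^{l_1}\omega)$ from above, followed by monotone convergence in $k$), and your three discrepancy modes do exhaust the ways $V_k^{(n)}$ and $G_k^{\infty}$ can differ. Where you genuinely deviate is in the bookkeeping. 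First, you charge a point at its \emph{last} visit and read the multiplicity backwards, so the $p-1$ return loops sit in the dual (past) walk; the paper charges $J_n^{(p)}$ at the first visit and $J_n^{p}$ at the $p$-th visit from the end, so the loops sit in the \emph{forward} walk and the stated probability drops out of the strong Markov property with no further identification. Your route therefore owes one small unstated step: for a non-symmetric walk a backward excursion is a copy of the dual walk run to its return time, not of $S$, so identifying $E[G_0^{\infty}]$ with the theorem's expression (which uses forward copies $S^i$ up to $T_{0,i}$) needs the standard time-reversal observation that reversing a loop at $0$ preserves both its probability and its range --- legitimate here because only ranges enter the event, but it should be said. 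Second, you obtain $J_n^{(p)}$ by the subtraction $J_n^{(p)}=J_n^{p}-J_n^{p+1}$; this works, but to see that the difference of the two limits is the stated probability you should record the identity $P(A_p)-P(A_{p+1})=P(A_p,\ 0\notin\{S'_m\}_{m=1}^{\infty})$, proved by peeling the first excursion off $S'$ and recognising what remains as $A_{p+1}$ --- the paper instead proves the $J_n^{(p)}$ formula directly, with events forbidding any other visit on the whole two-sided path. What each version buys: yours has a single clean stationary functional per count and makes the almost-sure error analysis fully explicit; the paper's tuple-summation avoids duality and reversal entirely and delivers both limits in exactly the stated form.
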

\begin{thm}\label{conv2}
For any random walk in $\mathbb{Z}^{d}$ with $d\ge2$ which satisfies (\ref{group}),
 \begin{align}\label{h}
\psi(x):=\lim_{n \to \infty}\frac{-1}{n}\log P(L_n\ge nx)\quad \text{exists}
 \end{align}
for all $x$, and $\psi(\cdot)$ has the following properties:
 \begin{align}\label{h1}
&\psi(x)=0 \quad \text{for } x\le q,\\
\label{h2}
&0<\psi(x)<\infty \quad \text{for } q < x\le 1,\\
\label{h3}
&\psi(x)=\infty \quad \text{for } 1< x,\\
\label{h4}
& \psi \text{ is continuous on }x\in[0,1],\\
\label{h5}
& \psi \text{ is convex on }x\in[0,1],\text{ and }\\
\label{h6}
& \psi \text{ is strictly increasing on }x\in[q,1].
 \end{align}
\end{thm}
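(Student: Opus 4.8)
The plan is to follow the strategy of Hamana and Kesten \cite{Hama2} for the range $R_n$, adapting it to $L_n$. The starting point is a subadditivity estimate for $L_n$ itself. If a point $y$ lies in $H_{m+n}$, it has a neighbor $z\notin\{S_k\}_{k=0}^{m+n}$; then $z$ is absent from both $\{S_k\}_{k=0}^{m}$ and $\{S_k\}_{k=m}^{m+n}$, while $y$ belongs to at least one of these two ranges, and is therefore an inner boundary point of the corresponding piece. This gives
\begin{align*}
H_{m+n}\subseteq H_m\cup H_n',
\end{align*}
where $H_n'$ is the inner boundary of the last $n$ steps $\{S_{m+k}\}_{k=0}^{n}$, so that $L_{m+n}\le L_m+L_n'$ with $L_n'=\sharp H_n'$ independent of the first $m$ steps and distributed as $L_n$. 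Two consequences follow at once. First, iterating the bound over $k$ blocks of length $\lfloor n/k\rfloor$ dominates $L_n$ by a sum of independent copies of $L_{\lfloor n/k\rfloor}$; since these are bounded by $\lfloor n/k\rfloor+1$ with mean close to $q\lfloor n/k\rfloor$ for $k$ large (Theorem \ref{conv1} and bounded convergence), Cram\'er's inequality for bounded i.i.d.\ sums gives $P(L_n\ge nx)\le e^{-cn}$ for every $x>q$, which yields the positivity in (\ref{h2}). Second, for $\theta\ge0$ the function $\Lambda_n(\theta)=\log E[e^{\theta L_n}]$ is subadditive, so $\Lambda(\theta)=\lim_n n^{-1}\Lambda_n(\theta)$ exists and is convex, supplying the Chebyshev upper bound and the convexity input.

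The heart of the proof is the matching lower bound, i.e.\ the approximate superadditivity
\begin{align*}
P(L_{m+n}\ge (m+n)x)\ge P(L_m\ge mx)\,P(L_n\ge nx)\,e^{-o(m+n)},
\end{align*}
which makes $f_n(x):=-\log P(L_n\ge nx)$ subadditive up to a sublinear error and hence forces $\psi(x)=\lim_n f_n(x)/n$ to exist for every $x\le1$. To build it I would condition on the first $m$ steps realizing $\{L_m\ge mx\}$, insert a connecting segment of length $o(n)$, constructed using (\ref{group}), carrying the walk into a fresh region, and then run the last $n$ steps conditioned on $\{L_n\ge nx\}$ far away. The delicate point, special to the inner boundary and absent in the range problem, is that an inner boundary point of one piece survives in $H_{m+n}$ only if its witnessing empty neighbor is not filled by the other piece; one must therefore arrange not merely that the two ranges are essentially disjoint but that neither piece invades the empty neighbors certifying the other's boundary points. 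I expect this geometric separation estimate to be the main obstacle, since in the recurrent planar case the walk returns to the neighborhood of its past indefinitely, so the required separation can be achieved only at a controlled, subexponential probability cost.

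Granting existence, the six properties follow along standard lines. Property (\ref{h3}) is immediate from $L_n\le R_n\le n+1$, which makes $\{L_n\ge nx\}$ empty for $x>1$ and all large $n$. For (\ref{h1}), Theorem \ref{conv1} gives $L_n/n\to q$ a.s., so $P(L_n\ge nx)\to1$ and $\psi(x)=0$ when $x<q$, the value at $x=q$ being recovered afterwards by continuity. In (\ref{h2}), finiteness for $x\le1$ comes from an explicit construction: forcing the walk to repeat a single nonzero step $e\in\supp(X)$ produces a collinear trajectory all of whose $n+1$ sites are inner boundary points (for $d\ge2$ each such site has a neighbor off the line), so $L_n=n+1$ on an event of probability at least $e^{-Cn}$, whence $\psi(x)\le C<\infty$; positivity for $x>q$ was obtained above. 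Convexity (\ref{h5}) follows either from the superadditivity applied with two levels $x,y$ at equal block lengths, yielding midpoint convexity and hence convexity after continuity, or from identifying $\psi$ with a Legendre transform of $\Lambda$ on $[q,1]$; continuity (\ref{h4}) on $(0,1)$ is then automatic from finite convexity, while the endpoints $x=q$ and $x=1$ are treated separately. Finally (\ref{h6}) is purely analytic: a convex function on $[q,1]$ that vanishes at $q$ and is strictly positive on $(q,1]$ cannot be locally constant there, for a flat stretch would force, by monotonicity of the one-sided derivatives, the value $0$ at its left endpoint, contradicting positivity; hence $\psi$ is strictly increasing on $[q,1]$.
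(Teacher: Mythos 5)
You correctly reduce everything to an approximate superadditivity for $P(L_n\ge nx)$, and you correctly identify the difficulty peculiar to the inner boundary (a boundary point of one piece survives only if its witnessing empty neighbor is not invaded by the other piece) --- but you then leave precisely that step as an acknowledged obstacle, and the mechanism you hint at, steering the walk into a ``fresh region far away'' and paying a subexponential cost for separation, is not viable and is not what is needed. Genuine separation would require a bridge of length comparable to the diameter of the first range, and in the recurrent case $d=2$ the second segment would still have to avoid the first piece's certificates for $n$ steps, an event whose probability, conditioned on the atypical events $\{L_m\ge mx\}$ and $\{L_n\ge nx\}$, you do not control; nothing in the sketch shows the cost is $e^{-o(m+n)}$. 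The resolution (Lemma \ref{sub+}, after the scheme of \cite{Hama2}) tolerates intersections instead of preventing them: one concatenates the two walks by a short bridge of length at most $dq$ with $q=\lceil (nm)^{1/(d+1)}\rceil$ landing at one of the $(q+1)^d$ shifts $\lambda\in\Xi_q$ permitted by (\ref{group}), introduces the set $U$ (inner boundary together with outer boundary) so that the concatenated walk satisfies the deterministic bound $L\ge L_n+\hat{L}_m-N_{n,m}(\lambda)$, where $N_{n,m}(\lambda)$ is the overlap of the two $U$-sets, and then a first-moment count, $\sum_{\lambda}N_{n,m}(\lambda)\le \sharp U_n\,\sharp \hat{U}_m\le (2d+1)^2nm$, shows that at least half of the shifts give the sublinear overlap $O((nm)^{1/(d+1)})$, while the bridge costs only $\zeta^{dq}=e^{-O((nm)^{1/(d+1)})}$. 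This shift-averaging idea is the missing core of your proposal. A second point your sketch never confronts: $L_n$ is not monotone in $n$, so the patching of blocks into a general $n$ (Proposition \ref{right cont}) cannot proceed as for $R_n$ and needs the substitute (\ref{note2}), namely $P(L_{n+v}\ge y-2dv)\ge P(L_n\ge y)$, throughout.

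On the credit side, several of your ingredients are correct and in places more elementary than the paper's. Your deterministic subadditivity $H_{m+n}\subseteq H_m\cup H_n'$ is valid (the witness $z$ is absent from both pieces while $y$ lies in at least one), a fact the paper does not record, and your block decomposition plus Cram\'er's bound for bounded i.i.d.\ summands then yields the positivity in (\ref{h2}) without the paper's appeal to the right derivative at $0$ of the limiting log-moment generating function from \cite{Hama3}. Likewise (\ref{h1}) from Theorem \ref{conv1}, (\ref{h3}) from $L_n\le n+1$, finiteness via the collinear trajectory, and the purely analytic derivation of (\ref{h6}) from convexity, monotonicity and $\psi(q)=0$ all match or validly replace the paper's citations. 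One caveat: continuity of $\psi$ at $x=1$ does not follow from convexity and finiteness alone (a finite convex function may jump at an endpoint), so ``endpoints treated separately'' is a placeholder for the argument of \cite{Hama2}, Proposition 4, which the paper invokes. As it stands, your proposal establishes the peripheral properties but not the existence (\ref{h}) of the limit, so it is incomplete at its center.
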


 We call the random walk  $\{S_n\}_{n=0}^{\infty}$ simple if $P[S_1 =b_j]=1/2d$ where $b_j$, $j\in \{\pm1,\ldots. \pm d\}$ are neighbors of the origin in the square lattice $\mathbb Z^d$. 
\begin{thm}\label{conv3} ~  Let  $d=2$ and $p\ge1$ and suppose the random walk to be simple. Then
$$\lim_{n\to \infty}EL_n\times\frac{(\log n)^2}{n},
 \lim_{n\to \infty}EJ_n^{(p)}\times\frac{(\log n)^2}{n},
  \lim_{n\to \infty}EJ_n^{p}\times\frac{(\log n)^2}{n} 
$$ exist. Moreover, it holds that  
 \begin{align}
\label{s1}
\frac{{\pi}^2}{2} \le \lim_{n\to \infty}EL_n\times\frac{(\log n)^2}{n} &\le 2{\pi}^2,\\
\label{s2}
\frac{\tilde{c}^{p-1}\pi^2}{4} \le  \lim_{n\to \infty}EJ_n^{(p)}\times\frac{(\log n)^2}{n}  &\le \tilde{c}^{p-1}\pi^2,\\
\label{s3}
\frac{\tilde{c}^{p-1}\pi^2}{2}\le   \lim_{n\to \infty}EJ_n^{p}\times\frac{(\log n)^2}{n}  &\le2\tilde{c}^{p-1}\pi^2,
 \end{align}
where $\tilde{c}=P(T_0<T_b)$ with  some $b\in {\cal N}(0)$.
\end{thm}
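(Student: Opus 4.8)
The plan is to reduce all three counts to a single-neighbour version and to evaluate that version by a passage-time decomposition. Fix a unit vector $b\in{\cal N}(0)$ and set $L_n^{(b)}=\sharp\{x\in\{S_m\}_{m=0}^n:\,x+b\notin\{S_m\}_{m=0}^n\}$, with $J_n^{(p),b}$ and $J_n^{p,b}$ defined by adding the constraint that $x$ is visited exactly, respectively at least, $p$ times. Since $x\in H_n$ precisely when $x+e\notin\{S_m\}_{m=0}^n$ for some $e\in{\cal N}(0)$, we have $L_n^{(b)}\le L_n\le\sum_{e\in{\cal N}(0)}L_n^{(e)}$, and by the symmetry of simple random walk $EL_n^{(e)}$ is the same for all four $e$. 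Hence $EL_n^{(b)}\le EL_n\le 4EL_n^{(b)}$, and likewise for the two $J$-counts, so it is enough to find the exact limits of $(\log n)^2EL_n^{(b)}/n$, $(\log n)^2EJ_n^{(p),b}/n$ and $(\log n)^2EJ_n^{p,b}/n$; the two sides of (\ref{s1})--(\ref{s3}) will then be this limit and four times it.

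To evaluate $EL_n^{(b)}$ I would split each counted point $x$ by the time $j$ of its last visit. By the Markov property at time $j$ the future segment contributes $P(T_0>n-j,\,T_b>n-j)$ (no further return to $x$, translate avoided), while the past segment, after reversing time into the dual walk and summing freely over $x$, contributes $P(T_b>j)$ (translate avoided, earlier returns to $x$ unconstrained). Writing $g(k)=P(T_b>k)$ and $f(k)=P(T_0>k,\,T_b>k)$, this gives the convolution $EL_n^{(b)}=\sum_{j=0}^n g(j)f(n-j)$. Organising the same bookkeeping instead around the first visit, the $p-1$ intermediate excursions away from and back to $x$, and the tail, gives $EJ_n^{(p),b}=(f*h^{*(p-1)}*f)(n)$ and $EJ_n^{p,b}=(f*h^{*(p-1)}*g)(n)$, where $h(k)=P(T_0=k,\,T_b>k)$ is the weight of one excursion returning to $x$ before the translate is hit, with $\sum_k h(k)=P(T_0<T_b)=\tilde c$.

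The analytic input is the planar Green's function asymptotics $g(k)\sim\pi/\log k$ (avoiding a single fixed point) and $f(k)\sim\pi/(2\log k)$ (avoiding the two-point set $\{0,b\}$ after leaving $0$). A Tauberian convolution estimate---if $u(k)\sim\alpha/\log k$ and $v(k)\sim\beta/\log k$ then $(u*v)(n)\sim\alpha\beta\,n/(\log n)^2$, the rescaled integral of the two slowly varying factors tending to $1$---then yields $\lim(\log n)^2EL_n^{(b)}/n=\pi\cdot(\pi/2)=\pi^2/2$; treating each excursion block as contributing just its total mass $\tilde c$ gives $\lim(\log n)^2EJ_n^{(p),b}/n=\tilde c^{\,p-1}(\pi/2)^2=\tilde c^{\,p-1}\pi^2/4$ and $\lim(\log n)^2EJ_n^{p,b}/n=\tilde c^{\,p-1}\pi\cdot(\pi/2)=\tilde c^{\,p-1}\pi^2/2$. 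Combined with the first paragraph these are exactly the six constants in (\ref{s1})--(\ref{s3}). Existence of the limits for the unreduced $EL_n$, $EJ_n^{(p)}$, $EJ_n^{p}$ follows by running the same convolution analysis term by term through the inclusion--exclusion over nonempty subsets $A\subseteq{\cal N}(0)$, each multi-point avoidance probability still being of order $1/\log k$, so that the limit is a well-defined number in the stated interval even though the multi-point capacities are not computed.

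The main obstacle is the excursion factor $h^{*(p-1)}$. Even though $\sum_k h(k)=\tilde c<\infty$, the excursion length has infinite mean, since for planar walk $T_0$ is only barely integrable; I must therefore show that convolving with $h^{*(p-1)}$ reproduces the clean factor $\tilde c^{\,p-1}$ without eroding the $1/(\log n)^2$ order, i.e. that the mass of $f*h^{*(p-1)}*f$ still concentrates on splittings in which both long legs have length of order $n$. Establishing this, together with the precise two-point constant $f(k)\sim\pi/(2\log k)$ and the uniform edge control in the convolution estimate, is where the real difficulty lies; the neighbour reduction and the single-point asymptotic $g(k)\sim\pi/\log k$ are comparatively standard.
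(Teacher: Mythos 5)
Your proposal is correct and follows essentially the same route as the paper: your neighbour union/symmetry reduction (a factor between $1$ and $4$) is the paper's bound $P(C'_{k,n,j})\le P(C_{k,n})\le \sum_{j=1}^4 P(C'_{k,n,j})$, your convolution identities $g*f$, $f*h^{*(p-1)}*f$ and $f*h^{*(p-1)}*g$ are exactly its last-visit/first-visit-plus-excursions factorizations with the cutoff $s$ giving the $(\tilde c-\epsilon)^{p-1}$ lower bound, and your inclusion--exclusion argument for existence matches its appeal to Kesten--Spitzer. The two points you flag as the real difficulty are precisely what the paper supplies: the two-point asymptotic $f(k)\sim \pi/(2\log k)$ is its Lemma \ref{local} (a last-exit decomposition over $\{0,b_i\}$ combined with the local CLT and the symmetry of $0$ and $b_i$), and the concern that $h^{*(p-1)}$ could erode the $n/(\log n)^2$ order is resolved by truncating each excursion gap at $\lceil n/\log n\rceil$ and using the Dvoretzky--Erd\H{o}s tail estimate $P(\lceil n/\log n\rceil<T_0\le n)=O(\log\log n/(\log n)^2)$, which makes the long-gap contribution $o(n/(\log n)^2)$ after summation over $l_1$.
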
  


\section{Proof}
\subsection{Proof of Theorem \ref{conv1}} 
Let $\{Z_n\}_{n\in\mathbb{Z}}$ be a sequence of random variables defined by $Z_0=0$,
 $\{Z_n\}_{n=1}^{\infty}$=$\{S_n\}_{n=1}^{\infty}$, and   
$\{Z_{-n}\}_{n=1}^{\infty}$ =$\{S'_n\}_{n=1}^{\infty}$, 
where $\{S'_n\}_{n=1}^{\infty}$ is independent dual walk of $\{S_n\}_{n=1}^{\infty}$. 
We suppose $\{Z_n\}_{n\in{\mathbb {Z}}}$ to be a canonical realization, so that    $P$ is the probability measure on the product space $(\Pi_{n=-\infty}^\infty  \Om_n,  {\cal F})$ such that   $Z_n$ is the coordinate  map from  $\Om= \Pi_{n=-\infty}^\infty  \Om_n$ into $\Om_n$, where  $\Om_n $'s are copies of   $\mathbb Z^d$ and ${\cal F} = \sigma({\{Z_n}\}_{n\in \mathbb Z} )$.  
Let  $\phi$ be  the usual shift operator:  $\phi$ : $\Omega \to \Omega$ and 
 $Z_n\circ\phi=Z_{n+1}$.  
Let  $\phi ^m$ be the  $m$ times iterate of $\phi$: formally $\phi^0(\om) = \om$ and  $\phi^m = \phi \circ \phi^{m-1}$ $(m\geq 1)$.  Since $\phi$ is $P$-measure preserving, 
by the ergodic theorem  it holds that for any $A\in{\cal F}$
 \begin{align}\label{rewited}
\lim_{n\to\infty}\frac{1}{n} \sum_{m=0}^{n-1} 1_A(\phi^m\omega )= P(A) \quad a.s.
 \end{align}

\begin{proof}[Proof of Theorem \ref{conv1}]
Let $A$ be the event that $\{S_m\}_{m=0}^{\infty}\cup\{S'_m\}_{m=0}^{\infty} \not\supset {\cal N} (0)$ and $0\notin\{S_m\}_{m=1}^{\infty}$. 
In terms of $Z_m$, $A$ is expressed as $\{Z_m\}_{m\in \mathbb{Z} } \not\supset {\cal N} (Z_0)$ and $Z_0\notin \{Z_m\}_{m=1}^{\infty}$. 
Note that we can write
 \begin{align}\label{uu}
L_n=\sharp \{m: 0\le m \le n, \{S_{l}\}_{l=0}^{n}\not\supset {\cal N} (S_m) ,S_m \notin\{S_l\}_{l=m+1}^n \}.
 \end{align}
Then 
 \begin{align*}
L_n \ge \sum_{m=0}^{n} 1_{A}(\phi^m\omega )
 \end{align*}
since the right hand side equals 
$$\sharp \{m: 0\le m \le n, \{S_{l}\}_{l=0}^{\infty}\cup\{S'_{l}\}_{l=0}^{\infty} \not\supset {\cal N} (S_m) ,S_m \notin\{S_{m+l}\}_{l=1}^{\infty} \}.$$
Noting  that $A\in {\cal F}$, we apply  (\ref{rewited}) to see
  \begin{align}\label{upper}
\liminf_{n \to \infty} \frac{L_n}{n} \ge P(A) \quad a.s.
 \end{align}
To prove the inequality in opposite direction, 
let $A_k$ be the event that  $\{Z_m \}_{m=0}^k \cup\{Z_m\}_{m=-k}^{0} \not\supset {\cal N} (Z_0)$ and $Z_0 \notin\{Z_m\}_{m=1}^k$. 
Then, in view of  (\ref{uu}) we  obtain that for any $k\ge 1$
 \begin{align*}
L_n \le 2k+\sum_{m=k}^{n-k} 1_{A_k}(\phi^m\omega )
 \end{align*}
since the sum on the right hand side equals
$$\sharp \{m: k\le m \le n-k,\, \{S_{m+l}\}_{l= -k}^k \not\supset {\cal N} (S_m), \, S_m \notin\{S_{m+l}\}_{l=1}^k\}.$$
As before an application of  (\ref{rewited})  shows 
  \begin{align*}
\limsup_{n \to \infty} \frac{L_n}{n} \le P(A_k) \quad a.s..
 \end{align*}
Since $\cap_{k=1}^{\infty} A_k= A$, we now conclude
  \begin{align}\label{lower}
\limsup_{n \to \infty} \frac{L_n}{n} \le P(A) \quad a.s..
 \end{align}
By (\ref{upper}) and (\ref{lower}) the proof is complete.
\end{proof}

\begin{rem}
We can rewrite Theorem \ref{conv1} more generally. 
For any two finite sets $\tilde{H_j} \subset H \subset {\mathbb{Z}}^d$ for $j=1,2...N$, let 
$$L'_n=\sharp \bigcup_{j=1}^N\{S_i: 0\le i \le n, \{S_m\}_{m=0}^n \cap (S_i+H)= (S_i+\tilde{H_j}) \}.$$  
By the same argument as in the proof of Theorem \ref{conv1}, we can deduce that 
$$\lim_{n \to \infty}  \frac{L'_n}{n}= P((\{S_m\}_{m=0}^{\infty} \cup  \{S'_m\}_{m=0}^{\infty}) \cap H=\tilde{H_j}\text{ for some }j=1,2...N,
0\notin\{S_m\}_{m=1}^{\infty} ) \quad a.s..$$
\end{rem}

\begin{proof}[Proof of Theorem \ref{conv1+}]
First, we prove the upper bound of the first formula. 
Note that if $l=\{l_i\}_{i=1}^p$ and $G_{l}$ is the event that 
$$\{S_m\}_{m=0}^{\infty} \not\supset {\cal N} (S_{l_1}),  S_{l_1}=S_{l_2}=...=S_{l_p} \quad
\mbox{and} \quad  S_{l_1}\not\in (\{S_m\}_{m=0}^{\infty}-\{S_{l_i}\}_{i=1}^p),$$
 then  for any $n\ge (p-1)k$
 \begin{align*}
J_n^{(p)}\ge \sum_{l_1=0}^{n-(p-1)k} \sum_{l_2=l_1+1}^{l_1+k}
\sum_{l_3=l_2+1}^{l_2+k}...\sum_{l_p=l_{p-1}+1}^{l_{p-1}+k} 1_{G_l}(\omega).
 \end{align*}
So it holds that if $h=\{h_i\}_{i=2}^p$ and $\tilde{G}_{h}$ is the event that 
$\{Z_m\}_{m\in {\mathbb Z}} \not\supset {\cal N} (Z_{0})$,  $Z_0=Z_{h_2}=...=Z_{h_p}$ 
and $Z_{0}\not\in (\{Z_m\}_{m\in {\mathbb Z}}-(\{ Z_{h_i}\}_{i=2}^p\cup Z_0) )$,
then 
 \begin{align*}
J_n^{(p)}\ge \sum_{l_1=0}^{n-(p-1)k} \sum_{h_2=1}^{k}
\sum_{h_3=h_2+1}^{h_2+k}...\sum_{h_p=h_{p-1}+1}^{h_{p-1}+k} 1_{\tilde{G}_h}(\phi^{l_1}\omega).
 \end{align*}
Noting  that $\tilde{G}_{h} \in {\cal F}$, by (\ref{rewited}) we get for any $k\ge1$
 \begin{align}\label{d1}
\liminf_{n\to \infty} \frac{J_n^{(p)}}{n} 
\ge P&(T^{i}_{Z_0}-T^{i-1}_{Z_0}\le k\text{ for }i=1,..,p-1, T^p_{Z_0}=\infty,\\
\notag
 &\{Z_m\}_{m\in {\mathbb Z}} \not\supset {\cal N}( Z_{0})
\text{ and } Z_0\notin\{Z_m\}_{m=-\infty}^{-1} .),
 \end{align}
where $T^j_{a}=\inf\{m>T^{j-1}_{a}: Z_m=a\}$ and $T_a^0=0$.
Therefore, by the strong Markov property we get
 \begin{align}\notag
\liminf_{n\to \infty} \frac{J_n^{(p)}}{n} 
\ge
 &P(T^{p-1}_{Z_0}<\infty, T^{p}_{Z_0}=\infty, \{Z_m\}_{m\in {\mathbb Z}} \not\supset {\cal N} (Z_{0})\text{ and }Z_0 \notin\{Z_m\}_{m=-\infty}^{-1} .)\\
\notag
=&P(\{S_k\}_{k=0}^{\infty}\cup\{S'_k\}_{k=0}^{\infty} \cup(\cup_{i=1}^{p-1}\{S^i_k\}_{k=0}^{T_{0,i}}) \not\supset {\cal N}(0),\\
\label{d2}
&0 \notin\{S_k\}_{k=1}^{\infty}\cup\{S'_k\}_{k=1}^{\infty}
 \text{ and }0\in \{S^i_k\}_{k=1}^{\infty} 
\text{ for }i=1,..,p-1.)
\quad a.s..
 \end{align}
By (\ref{d1}) and (\ref{d2}) we get the one side inequality.
To prove the inequality in opposite direction, note that 
 \begin{align*}
J_n^{(p)}\le 2k+\sum_{l_1=k}^{n} \sum_{l_2=l_1+1}^{\infty}...\sum_{l_p=l_{p-1}+1}^{\infty} 1_{G_{l,k}}(\omega),
 \end{align*}
where $G_{l,k}$ is the event that 
$\{S_m\}_{m=l_1-k}^{l_p+k} \not\supset {\cal N}(S_{l_1})$, $S_{l_1}=S_{l_2}=...=S_{l_p}$, 
and $S_{l_1}\not\in (\{S_m\}_{m=l_1-k}^{l_p+k}-\{S_{l_i}\}_{i=1}^p)$. 
Hence,  if we set $h=\{h_i\}_{i=2}^p$, then
 \begin{align*}
J_n^{(p)}\le 2k+\sum_{l_1=k}^{n} \sum_{h_2=1}^{\infty}...\sum_{h_p=h_{p-1}+1}^{\infty} 1_{\tilde{G}_{h,k}}(\phi^{l_1}\omega),
 \end{align*}
where $\tilde{G}_{h,k}$ is the event that 
$\{Z_m\}_{m=-k}^{h_p+k}\not\supset {\cal N} (Z_{0})$, $Z_0=Z_{h_2}=...=Z_{h_p}$, 
and $Z_{0}\not\in (\{Z_m\}_{m=-k}^{h_p+k}-(\{ Z_{h_i}\}_{i=2}^p\cup Z_0) )$. 
If we note that $\tilde{G}_{h,k} \in {\cal F}$, by (\ref{rewited}) we get for any $k\ge1$
 \begin{align}\notag
&\limsup_{n\to \infty} \frac{J_n^{(p)}}{n} \\
\label{d3}
\le &P(T^{p-1}_{Z_0}<\infty,T^{p}_{Z_0}-T^{p-1}_{Z_0}>k,\{Z_m\}_{m=-k}^{T^{p-1}_{Z_0}+k} \not\supset {\cal N}( Z_{0}) \text { and }Z_0\notin\{Z_m\}_{m=-k}^{-1}.)\quad a.s..
 \end{align}
By the monotonicity in $k$, we find that as $k\to \infty$ the right hand side of the last formula converges to
 \begin{align}\label{d4}
P(T^{p-1}_{Z_0}<\infty, T^p_{Z_0}=\infty, \{Z_m\}_{m\in {\mathbb Z}} \not\supset {\cal N} (Z_{0})\text{ and } Z_0\notin\{Z_m\}_{m=-\infty}^{-1}.).
 \end{align}
By (\ref{d1}), (\ref{d2}), (\ref{d3}) and (\ref{d4}), the proof of the first formula is complete. 
Next we prove the second formula. 
Note that it holds that  for any $n\ge (p-1)k$
 \begin{align*}
J_n^{p}&\ge \sum_{l_1=0}^{n-(p-1)k} \sum_{l_2=l_1+1}^{l_1+k}
\sum_{l_3=l_2+1}^{l_2+k}...\sum_{l_p=l_{p-1}+1}^{l_{p-1}+k} 1_{\overline{G}_l}(\omega),\\
J_n^{p}&\le 2k+\sum_{l_1=k}^{n} \sum_{l_2=l_1+1}^{\infty}...\sum_{l_p=l_{p-1}+1}^{\infty} 1_{\overline{G}_{l,k}}(\omega),
 \end{align*}
where $\overline{G}_{l}$ is the event that 
$\{S_m\}_{m=0}^{\infty} \not\supset {\cal N} (S_{l_1})$, $S_{l_1}=S_{l_2}=...=S_{l_p}$ 
and $S_{l_1}\not\in (\{S_m\}_{m=l_1+1}^{\infty}-\{S_{l_i}\}_{i=2}^p)$, 
and $\overline{G}_{l,k}$ is the event that 
$\{S_m\}_{m=l_1-k}^{l_p+k} \not\supset {\cal N}(S_{l_1})$, $S_{l_1}=S_{l_2}=...=S_{l_p}$  
and $S_{l_1}\not\in (\{S_m\}_{m=l_1+1}^{l_p+k}-\{S_{l_i}\}_{i=2}^p)$. 
Since the rest of proof of the second formula is the same as the first one, we omit it.
\end{proof}
\subsection{Proof of Theorem \ref{conv2}}
\begin{lem}\label{sub+}
Let $c=d+2d^2+8d(2d+1)^2$. 
Then, there exists constant $\zeta\in (0,1)$ (depending only on $d$) such that for all integer $n,m\ge0$
and $y,z \in [0,\infty)$, it holds that
 \begin{align*}
P(L_{n+m}\ge y+z-c(nm)^{\frac{1}{d+1}})
\ge \frac{1}{2}\zeta^{{d(nm)}^\frac{1}{d+1}+d} P(L_n\ge y)P(L_m\ge z).
 \end{align*}
\end{lem}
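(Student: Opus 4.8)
The plan is to establish a super-multiplicativity-type inequality for $P(L_n \ge y)$ by splicing together two independent walk segments of lengths $n$ and $m$, controlling the error introduced at the junction. The fundamental difficulty is that $L_{n+m}$ is \emph{not} simply $L_n + L_m$ under concatenation: an inner-boundary point contributed by the first segment may cease to be an inner boundary point once the second segment fills in its missing neighbors, and vice versa. The correction term $c(nm)^{1/(d+1)}$ and the exponential prefactor $\zeta^{d(nm)^{1/(d+1)}+d}$ are precisely the price paid to quarantine this interaction.

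First I would decompose the walk $\{S_k\}_{k=0}^{n+m}$ at time $n$ into the first piece $\{S_k\}_{k=0}^{n}$ and the shifted second piece $\{S_{n+k} - S_n\}_{k=0}^{m}$, which by the Markov property is an independent copy of a length-$m$ walk. The key geometric observation is that a point of $H_n$ (inner boundary of the first piece) can be destroyed by the second piece only if the second piece comes within distance $1$ of it; symmetrically for the second piece. I would therefore want to force the two ranges to stay far apart. Set $r = (nm)^{1/(d+1)}$ as the relevant length scale. Conditioning on the increment that connects the two pieces, I would insert an event that the connecting displacement carries the walk a distance of order $r$ in a fixed coordinate direction, using roughly $r$ deterministic steps, each contributing a factor bounded below by $\zeta$; this accounts for the $\zeta^{d r}$ factor. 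On this event the first range occupies a region whose boundary points near the separating hyperplane number at most $O(r^d)$ by a surface-area estimate, and only these can be affected — giving the $c\cdot r$ loss in the $L$-value with the stated combinatorial constant $c = d + 2d^2 + 8d(2d+1)^2$ counting neighbors, boundary cells, and overlap terms.

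The key steps, in order, are: (1) write $L_{n+m} \ge L_n^{(1)} + L_m^{(2)} - (\text{number of junction-affected points})$ where $L_n^{(1)}, L_m^{(2)}$ are the inner-boundary counts of the two independent pieces; (2) bound the junction-affected count by $c\,r$ on a good event $G$ that separates the two ranges by a corridor of width exceeding $1$; (3) show $P(G \mid \text{endpoints}) \ge \zeta^{d r + d}$ by prescribing the intermediate steps, where $\zeta$ is a lower bound on single-step probabilities in the relevant directions (existence of such $\zeta$ uses (\ref{group}) and finiteness of the step law); and (4) combine via independence, $P(L_{n+m} \ge y + z - c\,r) \ge P(G)\,P(L_n^{(1)} \ge y)\,P(L_m^{(2)} \ge z)$, absorbing the conditioning into the factor $\tfrac12\zeta^{d r + d}$.

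The main obstacle I anticipate is step (2): making precise the claim that only $O(r^d)$ inner-boundary points can be created or destroyed at the junction, and verifying that the constant is exactly $c = d + 2d^2 + 8d(2d+1)^2$. This requires a careful accounting of how a point $x \in H_n$ can fail to lie in $H_{n+m}$ — namely its missing neighbors must all be supplied by the second piece, which forces the second piece to intrude into the $\ell^\infty$-ball of radius $1$ around $x$ — together with the dual mechanism by which second-piece boundary points are destroyed by the first piece, and the genuinely new boundary points created at the interface. Each of the three summands in $c$ should correspond to one of these three effects (neighbor count $d$, a boundary-cell factor $2d^2$, and an overlap/double-counting term $8d(2d+1)^2$), and the delicate part is ensuring no affected point is undercounted while keeping the bound deterministic on the event $G$. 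Once this surface estimate is pinned down, the probabilistic gluing and the ergodic-theorem-free combination are routine.
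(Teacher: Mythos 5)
Your skeleton---splice two independent segments with a bridge of $O((nm)^{1/(d+1)})$ prescribed steps costing $\zeta^{d(nm)^{1/(d+1)}+d}$, bound the junction loss by $c(nm)^{1/(d+1)}$, then multiply by independence---matches the paper's, and you correctly flag step (2) as the crux. But the mechanism you propose for step (2) is a genuine gap, and it cannot be repaired in the form you describe. A displacement of order $r=(nm)^{1/(d+1)}$ in a fixed direction does \emph{not} separate the two ranges: the pieces have diameters up to $n$ and $m$, which dwarf $r$ when $d\ge 2$, so on your event $G$ there is no corridor of width exceeding $1$ between the ranges; actually forcing the second walk to avoid a neighborhood of the first range would cost probability exponentially small in $m$, not in $r$, destroying the large deviation rate. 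Moreover your deterministic ``surface-area'' estimate is false: the inner boundary of an $n$-step range near a hyperplane can contain order $n$ points (a path zigzagging along the hyperplane), so no $O(r^d)$ bound on junction-affected points holds on any such event. Finally, the prefactor $\tfrac12$ does not come from ``absorbing the conditioning''; your outline produces no such constant.

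The paper's resolution is different in kind: it never prevents the ranges from overlapping. With $U_n$, $\hat U_m$ the unions of inner and outer boundaries of the two pieces, and $N_{n,m}(\lambda)$ the number of points common to $U[0,n]$ and the second boundary translated by $S_n+\lambda$, one has the pathwise bound $L\{T_0,\dots,T_{n+p+m-1}\}\ge L_n+\hat L_m-N_{n,m}(\lambda)$ (boundary points can only be destroyed through boundary-to-boundary contact), supplemented by the near-monotonicity (\ref{note2}), $P(L_{n+v}\ge y-2dv)\ge P(L_n\ge y)$, to absorb the $p\le dq$ bridge steps---note that plain monotonicity of $P(L_n\ge y)$ in $n$ fails, a point your plan also glosses over. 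The decisive idea is then an averaging/pigeonhole over shifts: using (\ref{group}) pick linearly independent $v_1,\dots,v_d$ with $P(X=v_i)\ge\zeta$, set $\Xi_q=\{\sum_{i=1}^d k_iv_i: 0\le k_i\le q\}$ with $q=\lceil (nm)^{1/(d+1)}\rceil$, so each $\lambda\in\Xi_q$ is reachable in $p(\lambda)\le dq$ steps with probability $\ge\zeta^{dq}$. Since each pair of boundary points of the two pieces determines exactly one shift, $\sum_{\lambda\in\mathbb{Z}^d}N_{n,m}(\lambda)\le \sharp U_n\,\sharp\hat U_m\le (2d+1)^2nm$ holds deterministically, whence at least half of the $(q+1)^d$ shifts in $\Xi_q$ satisfy $N_{n,m}(\lambda)\le 2(2d+1)^2(nm)^{1/(d+1)}$ simultaneously with $\{L_n\ge y\}\cap\{\hat L_m\ge z\}$; averaging the spliced inequality over $\Xi_q$ yields exactly the factor $\tfrac12\zeta^{dq}$. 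This pigeonhole over roughly $r^d$ translations is also what forces the exponent $\frac{1}{d+1}$, balancing $\zeta^{dq}$ against the average overlap $(2d+1)^2nm/(q+1)^d$, and the constant $c=d+2d^2+8d(2d+1)^2$ arises from the two applications of (\ref{note2}) plus the Markov threshold $\frac{1}{4d}(c-d-2d^2)=2(2d+1)^2$---not from the three geometric effects you conjecture.
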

\begin{proof}
Let $\hat{X}_1,\hat{X}_2$,... be an independent copy of $X_1,X_2$,..., $\hat{S}_0=0$, $\hat{S}_k=\sum_{i=1}^{k}\hat{X}_i$, and $\hat{L}_n$ be 
the  number of the inner boundary points of $\{\hat{S}_0$, $\hat{S}_1$,...,$\hat{S}_n\}$, that is,
$$\hat{L}_n=\sharp \{\hat{S}_i :0 \le i\le n,\{\hat{S}_m\}_{m=0}^n \not\supset {\cal N} (\hat{S}_i) \}.$$
We define $L\{a_1,...,a_l\}$ to be the cardinality of the inner boundary of $\{a_i\}_{i=1}^l$, i.e., 
 \begin{align*}
L\{a_1,...,a_l\}=\sharp \{a_i : 1\le i\le l,\{a_k\}_{k=1}^l \not\supset {\cal N} (a_i )\},
 \end{align*}
and $U\{a_1,...,a_l\}$ to be the union of the outer boundary and the inner boundary of the range of $\{a_i\}_{i=1}^l$, i.e., 
 \begin{align*}
U\{a_1,...,a_l\}=&\{a_i : 1\le i\le l,\{a_k\}_{k=1}^l \not\supset {\cal N} (a_i) \}\\
\cup&\{x\in{\mathbb{Z}^{d}} : x \not\in\{a_k\}_{k=1}^l 
\text{ and there exists }y \in\{a_k\}_{k=1}^l \text{ such that dist}(x,y)=1  \}.
 \end{align*}
Moreover, we define
 \begin{align*}
U[a,b]=U\{S_a,S_{a+1},...,S_b\},\quad
\hat{U}[a,b]=U\{\hat{S}_a,\hat{S}_{a+1}...,\hat{S}_b\}.
 \end{align*}
Next we difine for $\lambda \in {\mathbb{Z}^{d}}$
$$N_{n,m}(\lambda)=
\sharp \{u\in \mathbb{Z}^d : u\in U[0,n] \text{ and }u\in S_n+\lambda+\hat{U}[0,n]\}.$$
For any fixed integers $p\ge0$ and $n\ge0,$ consider the random walk defined by
\begin{equation*}
T_k=\begin{cases}
S_k &(k\le n+p)
\\
S_{n+p}+\hat{S}_{k-n-p}&(k>n+p).
\end{cases}
\end{equation*}
Of course, $\{T_k\}_{k=0}$ has the same distribution as $\{S_k\}_{k=0}$, 
and hence also 
$P(L_{n+p+m}\ge l)=P(L\{T_0,...,T_{n+p+m-1}\}\ge l)$.  
We claim that on the event
 \begin{align}\label{for1}
\{S_{n+p}-S_n=\lambda \},
 \end{align}
it holds that 
 \begin{align}\label{for2}
L\{T_0,...,T_{n+p+m-1}\}\ge L_n+\hat{L}_m-N_{n,m}(\lambda),
 \end{align}
and 
 \begin{align*}
N_{n,m}(\lambda)=N_{n,m}(T_{n+p}-T_n)
= \sharp U\{T_0,...,T_{n-1}\} \cap U\{T_{n+p},...,T_{n+p+m-1} \}.
 \end{align*}
Owing to the assumption (\ref{group}),
 we can pick $d$ linearly independent vectors $v_1,..,v_d \in \mathbb{Z}^d$ 
for which $P(X=v_i)>0.$ We can then choose $0<\zeta<1$ such that $P(X=v_i)\ge \zeta$. 
We set
 \begin{align*}
\Xi_q=\{ \sum_{i=1}^d k_iv_i : 0\le k_i \le q \} \subset \mathbb{Z}^d.
 \end{align*}
For any $\lambda=\sum_{i=1}^d k_iv_i \in \Xi_q$, 
we then have that for $p=p(\lambda)=\sum_{i=1}^d k_i \le dq$,
 \begin{align*}
P(S_{n+p}-S_n=\lambda)=P(S_p=\lambda) \ge \zeta^p \ge \zeta^{dq}.
 \end{align*}
Moreover,
 \begin{align*}
\sharp \Xi_q=(\text{number of vectors }\omega \in \Xi_q) =(q+1)^d.
 \end{align*}
We take
 \begin{align*}
q=q(n,m)= \lceil (nm)^{ \frac{1}{d+1}} \rceil,
 \end{align*}
where $\lceil a \rceil$ denotes the smallest integer $\ge a$. 
Note that the simple monotonicity in $n$ of $P(L_n\ge y)$ does not hold, that is, it does not hold for any $n$, $y$, $v>0$ 
$P(L_{n+v}\ge y) \ge P(L_n\ge y)$. But it holds that for any $n$, $y$, $v>0$ 
 \begin{align}\label{note2}
P(L_{n+v}\ge y-2dv) \ge P(L_n\ge y).
 \end{align}
As a result of (\ref{for2}) and (\ref{note2}) for each $\lambda \in \Xi_q$
 \begin{align*}
&P(L_{n+m}\ge y+z-c(nm)^{\frac{1}{d+1}})\\
\ge &P(L_{n+dq+m}\ge y+z-(c-d)(nm)^{\frac{1}{d+1}})\\
\ge &P(L_{n+p+m}\ge y+z-(c-d-2d^2)(nm)^{\frac{1}{d+1}})\\
\ge &P(L_{n}\ge y,\hat{L}_m\ge z, 
S_{n+p}-S_n=\lambda,
N_{n,m}(\lambda)\le \frac{1}{4d}(c-d-2d^2)(nm)^{\frac{1}{d+1}}).
 \end{align*}
The event (\ref{for1}) depends only on $X_i$ with $n<i\le n+p$, and is independent of the events $\{L_{n}\ge y\}, \{\hat{L}_m\ge z\}$ and of random variable $N_{n,m}(\lambda)$. Consequently,
 \begin{align*}
&P(L_{n+m}\ge y+z-c(nm)^{\frac{1}{d+1}})\\
\ge &P(S_{n+p}-S_n=\lambda)P(L_{n}\ge y,\hat{L}_m\ge z, 
N_{n,m}(\lambda)\le  \frac{1}{4d}(c-d-2d^2)(nm)^{\frac{1}{d+1}})\\
\ge &\zeta^{dq} P(L_{n}\ge y,\hat{L}_m\ge z, 
N_{n,m}(\lambda)\le  \frac{1}{4d}(c-d-2d^2)(nm)^{\frac{1}{d+1}}).
 \end{align*}
Since this inequality holds for all $\lambda \in \Xi_q$, we can take its average over $\Xi_q$ to obtain
 \begin{align}\notag
&P(L_{n+m}\ge y+z-c(nm)^{\frac{1}{d+1}})\\
\notag
\ge &\frac{\zeta^{dq}}{|\Xi_q|}
\sum_{\lambda \in \Xi_q} P(L_{n}\ge y,\hat{L}_m\ge z, 
N_{n,m}(\lambda)\le  \frac{1}{4d}(c-d-2d^2)(nm)^{\frac{1}{d+1}})\\
\label{for3}
= &\frac{\zeta^{dq}}{|\Xi_q|}
E[ \sharp \{\lambda \in \Xi_q: N_{n,m}(\lambda)\le  \frac{1}{4d}(c-d-2d^2)(nm)^{\frac{1}{d+1}}\}
I_{\{L_{n}\ge y\}}I_{\{\hat{L}_m\ge z\}}].
 \end{align}
We shall shortly show that for all integer $n,m\ge0$
 \begin{align}\label{promise}
\sharp \{\lambda \in \Xi_q: N_{n,m}(\lambda)\le  \frac{1}{4d}(c-d-2d^2)(nm)^{\frac{1}{d+1}}\}\ge \frac{1}{2}(q+1)^d .
 \end{align}
Taking this for granted and recalling that 
 $\{L_{n}\ge y\}$ and $\{\hat{L}_m\ge z\}$ are independent,  if (\ref{promise}) is true, then we infer from (\ref{for3}) that 
 \begin{align}
\notag
&P(L_{n+m}\ge y+z-c(nm)^{\frac{1}{d+1}})\\
\notag
\ge &P(L_{n+dq+m}\ge y+z-(c-d)(nm)^{\frac{1}{d+1}})\\
\notag
\ge &\frac{\zeta^{dq}}{(q+1)^d}\frac{1}{2}(q+1)^d P(L_{n}\ge y)P(L_{m}\ge z)\\
\label{go}
\ge &\frac{1}{2} \zeta^{dq}P(L_{n}\ge y)P(L_{m}\ge z),
 \end{align}
which implies the inequality of the lemma. 
It remains to prove (\ref{promise}). We have
 \begin{align*}
&\sum_{\lambda \in \Xi_q}N_{n,m}(\lambda)
\le \sum_{\lambda\in \mathbb{Z}^d}N_{n,m}(\lambda) \\
=&\sum_{u \in \Xi_q}\sum_{\lambda \in \mathbb{Z}^d}
I[u\in U\{S_0,S_1,...,S_{n-1}\}] \times I[u\in U\{S_n+\lambda+\{\hat{S}_0,\hat{S}_1,...,\hat{S}_{n-1} \}\} ] \\
=&\sum_{u \in \Xi_q}I[u\in U\{S_0,S_1,...,S_{n-1} \}]\times
\sum_{\lambda \in \mathbb{Z}^d}
I[\lambda \in U\{S_n+u+\{\hat{S}_0,\hat{S}_1,...,\hat{S}_{n-1} \} \} ] \\
=&\sum_{u \in \Xi_q} I[u\in U\{S_0,S_1,...,S_{n-1}\}] \times 
\sharp U\{u-S_n-\hat{S}_0, u-S_n-\hat{S}_1,...,u-S_n-\hat{S}_{m-1}\} \\
=&\sum_{u \in \Xi_q} I[u\in U\{S_0,S_1,...,S_{n-1} \}] \times \sharp \hat{U}_m\\
\le &\sharp U_n\sharp \hat{U}_m\le (2d+1)^2nm.
 \end{align*}
So 
it holds that
 \begin{align*}
&\sharp \{\lambda \in \Xi_q: N_{n,m}(\lambda)\ge  \frac{1}{4d}(c-d-2d^2)(nm)^{\frac{1}{d+1}}\}\\
&\le \sum_{\lambda\in \Xi_q} \frac{N_{n,m}(\lambda)}{2(2d+1)^2(nm)^{\frac{1}{d+1}} }
\le \frac{(2d+1)^2nm}{2(2d+1)^2(nm)^{\frac{1}{d+1}} }
=\frac{1}{2}(nm)^{\frac{d}{d+1}}
\le \frac{1}{2}\sharp \Xi_q,
 \end{align*}
and hence 
 \begin{align*}
\sharp \{\lambda \in \Xi_q: N_{n,m}(\lambda)\le  \frac{1}{4d}(c-d-2d^2)(nm)^{\frac{1}{d+1}}\}\ge \frac{1}{2}(q+1)^d .
 \end{align*}
The proof of Lemma \ref{sub+} is completed.
\end{proof}
For $x\in {\mathbb{R}}$ we define
 \begin{align}\label{def1}
\psi(x)=\liminf_{n\to\infty} \frac{-1}{n} \log P(L_n \ge nx).
 \end{align}
Observe that $\psi(x)$ is nondecreasing in $x$. 
Moreover, it is bounded on $[0,1]$ because by (\ref{group}) there exists $a\in {\mathbb Z}\setminus \{0\}$ such that
 \begin{align*}
&P(L_n\ge n)\ge P(X_1(i)=X_2(i)=...=X_n(i)\neq0) =[P(X_1(i)=a)]^n
 \end{align*}
and $P(X_1(i)=a)>0$ for some $1\le i \le d$,  
where $X_j(i)$ denotes the $i$-th component of $X_j$. 
Hence,  we find 
 \begin{align}\label{g+}
\psi(1)<\infty.
 \end{align}  
We have to prove that liminf in (\ref{def1}) can be replaced by lim. 
We first show that this is permissible for any $x\in[0,1)$ at which $\psi$ is continuous from the right. 
\begin{prop}\label{right cont}
If (\ref{group}) holds and $d\ge2$ and if $\psi$ is right continuous at a given $x\in[0,1)$, then 
 \begin{align*}
\psi(x)=\lim_{n \to \infty}\frac{-1}{n}\log P(L_n\ge nx).
 \end{align*}
\end{prop}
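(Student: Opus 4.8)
The plan is to show that the $\liminf$ defining $\psi(x)$ in (\ref{def1}) coincides with the corresponding $\limsup$; since $\liminf\le\limsup$ always, it suffices to prove $\limsup_{n\to\infty}\frac{-1}{n}\log P(L_n\ge nx)\le\psi(x)$. Throughout write $b_n(w)=-\log P(L_n\ge w)$, which is nondecreasing in $w$, and recast Lemma \ref{sub+} in additive form as
\[
b_{n+m}\big((n+m)x' - c(nm)^{1/(d+1)}\big)\le b_n(nx')+b_m(mx')+\big(d(nm)^{1/(d+1)}+d\big)\log(1/\zeta)+\log 2,
\]
valid for any rate $x'$. The two features to exploit are that the threshold defect $c(nm)^{1/(d+1)}$ and the entropy defect $d(nm)^{1/(d+1)}\log(1/\zeta)$ are both of order $(nm)^{1/(d+1)}$, and that for $d\ge2$ one has $\tfrac{2}{d+1}<1$, so these defects are genuinely sublinear when $n,m$ are comparable.

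First I would run this inequality along a doubling sequence. Fix a small margin $\epsilon>0$ with $x+\epsilon<1$ and choose a base scale $n$ in the subsequence that nearly realises the $\liminf$, so that $\frac1n b_n(n(x+\epsilon))\le\psi(x+\epsilon)+\eta$ (finite since $\psi(x+\epsilon)\le\psi(1)<\infty$ by (\ref{g+})). Applying the displayed inequality with $x'=x+\epsilon$ to two equal blocks repeatedly, I would track, at the dyadic scale $N_j=2^j n$, both the surviving threshold $W_j$ and the accumulated bound $\beta_j=b_{N_j}(W_j)$. Because the defects carry a factor $2^{j(2/(d+1)-1)}$ with negative exponent, the geometric series $\sum_j 2^{j(2/(d+1)-1)}$ converges; hence $W_j\ge N_j\big((x+\epsilon)-\rho(n)\big)$ and $\frac1{N_j}\beta_j\le \psi(x+\epsilon)+\eta+Cn^{(1-d)/(d+1)}+C/n$ \emph{uniformly in $j$}, where $\rho(n),\,Cn^{(1-d)/(d+1)},\,C/n\to0$ as $n\to\infty$. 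Taking $n$ large enough makes $\rho(n)<\epsilon$, so every dyadic block carries threshold $W_j\ge N_j x$ while remaining ``good'', with per-unit cost at most $\psi(x+\epsilon)+2\eta$.

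Next I would treat an arbitrary large $N$ by expanding $\lfloor N/n\rfloor$ in binary, $N=\sum_i 2^{a_i}n + r$ with $0\le r<n$ and at most $O(\log N)$ terms, and assemble $L_N$ from the corresponding good dyadic blocks by iterating the displayed inequality, absorbing the $O(1)$ remainder $r$ through (\ref{note2}). Each of the $O(\log N)$ merges costs $O(N^{2/(d+1)})$ in both threshold and entropy, so the total overhead is $O(N^{2/(d+1)}\log N)=o(N)$ for $d\ge2$; keeping the margin $\epsilon$ then guarantees the final threshold is $\ge Nx$ for all large $N$, and one obtains $\frac1N b_N(Nx)\le \psi(x+\epsilon)+2\eta+o(1)$. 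Letting first $\eta\to0$ (via the freedom in the base scale $n$) gives $\limsup_N \frac1N b_N(Nx)\le\psi(x+\epsilon)$, and finally $\epsilon\downarrow0$ together with right continuity of $\psi$ at $x$ yields $\limsup\le\psi(x)$, completing the argument.

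I expect the main obstacle to be the simultaneous bookkeeping of the two defects: one must verify that along the doubling the surviving threshold never drops below the target rate while the entropy cost stays within $\eta$ per unit, both uniformly in the scale, and that the $O(\log N)$ merges in the binary assembly keep the overhead $o(N)$. This is exactly where the doubling structure is indispensable, since merging $\Theta(N/n)$ equal blocks of a fixed size would instead accumulate a defect of order $N^{(d+2)/(d+1)}$, which is superlinear. The role of the hypotheses is then transparent: $d\ge2$ makes $\tfrac{2}{d+1}<1$, so every defect sum converges, and right continuity is precisely what removes the auxiliary margin $\epsilon$ at the very end.
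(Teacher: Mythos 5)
Your proposal is correct and follows essentially the same route as the paper, which itself outlines the Hamana--Kesten argument: iterate the approximate superadditivity of Lemma \ref{sub+} along a dyadic sequence built on a base scale nearly realising the $\liminf$ at rate $x+\epsilon$, assemble a general $n$ from those dyadic scales with total defect $o(n)$ (possible since $\tfrac{2}{d+1}<1$ for $d\ge2$), absorb the remainder via (\ref{note2}), and invoke right continuity to remove the margin. The only deviation is cosmetic: you use a full binary expansion with $O(\log N)$ merges and an $O(1)$ remainder, whereas the paper fixes $l$ and uses at most $2l$ blocks $\alpha_i\sigma(k_i)$, absorbing a remainder of size $2^{-l+2}n$ through the margin $\delta$ --- both bookkeepings close the same way.
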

\begin{proof}
Since the idea of this proof is the same as in \cite{Hama2}, Proposition $2$,  we only give an  outline of the proof. 
Owing to  Lemma \ref{sub+} we can choose a constant $1<c<\infty$ so  that
 \begin{align}\label{sub++}
P(L_{n+m+dq}\ge y+z-c(nm)^{\frac{1}{d+1}})
\ge \frac{1}{2}\zeta^{{d(nm)}^\frac{1}{d+1}+d} P(L_n\ge y)P(L_m\ge z),
 \end{align}
where $q= \lceil (nm)^{ \frac{1}{d+1}} \rceil$. 
We set $\eta=\frac{d-1}{d+1}$ and $\xi=\frac{2}{d+1}$. 
If we define for any integer $N\ge1$,
 \begin{align*}
\sigma(0)&=N,\\
\sigma(k+1)&=2\sigma(k)+d\lceil [\sigma(k)]^{\xi} \rceil \quad
 k\ge0,
 \end{align*}
the following holds:
 \begin{align}\label{e1}
\frac{\sigma(i-1)}{\sigma(i)}\le \frac{1}{2}, \quad
\sigma(i)\ge 2^iN,
 \end{align}
and for some constants $c_1$, $c_2$, $N_0<\infty$ and $N\ge N_0$
 \begin{align}\label{e2}
1\le \frac{\sigma(k)}{2^kN}&\le 1+\frac{c_1}{N^{\eta}} \le 2\\
\label{e2+}
\sum_{i=0}^{k-1}2^{k-i}[\sigma(i)]^{\xi} &\le c_2N^{-\eta}\sigma(k).
 \end{align}
Now let $x\in[0,1)$ be such that $\psi$ is right continuous at $x$ and let $\epsilon>0$. 
Take $\delta\in(0,1)$ such that
 \begin{align*}
\psi(x+4\delta)\le \psi(x)+\epsilon.
 \end{align*}
Take $c_3 =\frac{\zeta^d}{2}<1$ and fix $l\ge2$ such that
 \begin{align}\label{e3}
(1-2^{-l+2})(x+2\delta)\ge x+\delta,\\
\label{e3+}
\frac{\delta}{2}>2d2^{-l+2}.
 \end{align}
Finally, fix $N\ge N_0$ so that 
 \begin{align}
\notag
P(L_n \ge N(x+4\delta))&\ge \exp[-N(\psi(x+4\delta)+\epsilon)]\\
\notag &\ge \exp[-N(\psi(x)+2\epsilon)],\\
\label{rew+}
1+\frac{c_1}{N^{\eta}} &\le \frac{x+4\delta}{x+3\delta}\\
\label{rew++}
N^{-\eta}&<\min\bigg\{\frac{\delta}{cc_2}, \frac{-2\epsilon}{c_2d\log \zeta}, \frac{1}{2d} \bigg\},\\
\label{rew+++}
5cl(3d+2)(N^{\xi}+1)&<\frac{1}{2}\delta N,\\
\label{rew++++}
 \text {and } \quad \frac{2}{N}|\log c_3| &<\epsilon.
 \end{align}
We shall first consider $P(L_n \ge nx)$ for $n\in \{\sigma(k) \} _{k=0}$. 
If we set  $m=n=\sigma(k-1)$ and 
 \begin{align*}
y=z=2^{k-1}N(x+4\delta)- c\sum_{i=0}^{k-2}2^{k-1-i}[\sigma(i)]^{\xi},
 \end{align*}
then (\ref{sub++}) gives for $k\ge1$ 
 \begin{align}
\notag &P(L_{\sigma(k)}\ge 2^k N(x+4\delta)- c\sum_{i=0}^{k-1}2^{k-i}[\sigma(i)]^{\xi})\\
\label{pre} \ge &c_3 \zeta^{d[\sigma(k-1)]^{\xi}}
[P( L_{\sigma(k-1)}\ge 2^{k-1} N(x+4\delta)- c\sum_{i=0}^{k-2}2^{k-1-i}[\sigma(i)]^{\xi})]^2.
 \end{align}
 By (\ref{e2}), (\ref{e2+}), (\ref{rew+}) and (\ref{rew++})  we also have
 \begin{align}\label{pre+}
2^k N(x+4\delta)- c\sum_{i=0}^{k-1}2^{k-i}[\sigma(i)]^{\xi}
 \ge \sigma(k)(x+2\delta).
 \end{align}
Hence, by (\ref{rew++}), (\ref{pre}), (\ref{pre+}) we get
 \begin{align}\label{pre++}
P(L_{\sigma(k)} \ge \sigma(k)(x+2\delta))
\ge[c_3]^{2^{k+1}} \exp [-2^kN(\psi(x)+3\epsilon)].
 \end{align}
Next we  expand  $n$ into a linear combination of the $\sigma(k)$ in the same as in \cite{Hama2}, Proposition $2$. Recall that we have fixed $l$ in (\ref{e3}) and  (\ref{e3+}). 
Now let $n\ge \sigma(2l)$, and take
 \begin{align*}
\hat{n}=n-2dl\lceil n^{\xi} \rceil.
 \end{align*} 
Owing to  (\ref{e2}) and (\ref{rew++}) we can pick $k_r$, $\alpha_r \in\{1,2\}$, $p\le l$ such that
 \begin{align}\label{pre+++}
0\le \hat{n}- \sum_{i=1}^{p} \alpha_i \sigma(k_i)<2^{-l+2}n.
 \end{align}

We set $\beta:=\sum_{i=1}^p \alpha_i$ and let $n_1<n_2<...< n_{\beta}$ 
be  number of the form $\sum_{i=1}^{j} \alpha_i \sigma(k_i)$ or
$\sum_{i=1}^{j} \alpha_i \sigma(k_i)-\sigma(k_j)$; 
the latter form is included only if $\alpha_j=2$. 
We now apply (\ref{sub++}) with  
$y=n_{\gamma}(x+2\delta)-5c\gamma n^{\xi}$,  
$z=(n_{\gamma+1}-n_{\gamma})(x+2\delta)$, 
$n=n_{\gamma}+d\gamma\lceil n^{\xi} \rceil$ and 
$m=n_{\gamma+1}-n_{\gamma}$. 
Using  (\ref{note2}) and (\ref{sub++}) we then find for $c>1$
 \begin{align}
\notag &P(L_{n_{\gamma+1}+d(\gamma+1)\lceil n^{\xi} \rceil}
\ge n_{\gamma+1}(x+2\delta)-5c(\gamma+1) n^{\xi})\\
\label{e5}
\ge&\frac{1}{2}\zeta^{dn^{\xi}+d} 
P(L_{n_{\gamma}+d\gamma\lceil n^{\xi} \rceil}
\ge n_{\gamma}(x+2\delta)-5c\gamma n^{\xi})
\times P(L_{n_{\gamma+1}-n_{\gamma}} \ge (n_{\gamma+1}-n_{\gamma})(x+2\delta)).
 \end{align}
Consequently,  by (\ref{pre}) and (\ref{e5}) we get
 \begin{align}\notag
&P(L_{n_{\beta}+d\beta\lceil n^{ \xi} \rceil} \ge n_{\beta}(x+2\delta)-5c\beta n^{\xi})\\
\label{rew1}
\ge& [c_3]^{2l+\sum_{j=1}^p \alpha_j 2^{k_j+1} } \zeta^{2ldn^{\xi}} 
\exp [-\sum_{j=1}^p \alpha_j 2^{k_j} N(\psi(x)+3\epsilon)].
 \end{align}
Now we apply (\ref{e3}), (\ref{rew+++}) and (\ref{pre+++}) to see that 
 \begin{align}\label{rew2}
n_{\beta}(x+2\delta)-5c\beta n^{\xi} \ge n(x+\frac{\delta}{2}). 
 \end{align}
On the other hand,  by (\ref{e3+}) and (\ref{pre+++}) we get for sufficiently large $n$,
 \begin{align}\label{rewr++}
\frac{\delta}{2}n 
\ge 2d(2^{-l+2}n+d(2l-\beta) \lceil n^{ \xi} \rceil)
\ge 2d(n-n_{\beta}-d\beta \lceil n^{ \xi} \rceil). 
 \end{align}
Hence,   by (\ref{note2}) and (\ref{rewr++}) we get for sufficiently large $n$,
 \begin{align}\label{rew3}
 P(L_n\ge nx)
\ge P(L_{n_{\beta}+d\beta\lceil n^{ \xi} \rceil} \ge n(x+\frac{\delta}{2})). 
 \end{align}
Since $\sum_{j=1}^p \alpha_j 2^{k_j}<n$, by (\ref{rew++++}), (\ref{rew1}), (\ref{rew2}) and (\ref{rew3}) we get the assertion of the proposition.
\end{proof}
\begin{lem}\label{dd}
For $d\ge2$, $\psi$ is convex and continuous on $(0,1)$.  
\end{lem}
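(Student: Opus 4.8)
The plan is to upgrade the subadditivity estimate (\ref{sub++}) into an approximate convexity inequality for $\psi$, and then to remove the approximation by a density/monotonicity argument. First I would record the structural facts already available: by (\ref{def1}) the function $\psi$ is nondecreasing, and by (\ref{g+}) it is finite (hence bounded) on $[0,1]$; moreover $P(L_N\ge Ny)>0$ for $y\le 1$ by the estimate preceding (\ref{g+}), so all the logarithms below are finite. A nondecreasing function has at most countably many discontinuities, so its set of continuity points $D\subseteq(0,1)$ is dense, and at each $x\in D$ the function $\psi$ is in particular right continuous; hence Proposition \ref{right cont} applies and the genuine limit $\lim_N\big(-\tfrac1N\log P(L_N\ge Nx)\big)$ exists and equals $\psi(x)$ for every $x\in D$. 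Since a convex function on an open interval is automatically continuous, it suffices to establish convexity.

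Next I would derive the one-sided convexity bound. Fix $x,x'\in D$, a rational weight $\theta=a/(a+b)\in(0,1)$ with $a,b\in\mathbb{N}$, and put $w=\theta x+(1-\theta)x'$. Writing $f_N(y)=-\tfrac1N\log P(L_N\ge Ny)$ (nondecreasing in $y$), I apply (\ref{sub++}) with $n=aj$, $m=bj$, $y=ajx$, $z=bjx'$, so that $y+z=(a+b)jw$ and the index on the left is $N_j=(a+b)j+dq$ with $q$ as in (\ref{sub++}). Taking $-\tfrac1{N_j}\log$ of (\ref{sub++}) yields
\[
f_{N_j}(w-\delta_j)\ \le\ \mathrm{err}_j+\tfrac{aj}{N_j}f_{aj}(x)+\tfrac{bj}{N_j}f_{bj}(x'),
\]
where $\delta_j\downarrow0$ absorbs the additive loss $c(nm)^{1/(d+1)}$ and the shift $dq$, and $\mathrm{err}_j=-\tfrac1{N_j}\log\big(\tfrac12\zeta^{d(nm)^{1/(d+1)}+d}\big)$. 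The key point, and the only place $d\ge2$ is used, is that $(nm)^{1/(d+1)}=(ab)^{1/(d+1)}j^{2/(d+1)}$ grows like $j^{2/(d+1)}$, so both $\delta_j$ and $\mathrm{err}_j$ are of order $j^{2/(d+1)-1}=j^{(1-d)/(d+1)}\to0$; for $d=1$ the exponent is $0$ and this fails. Because $x,x'\in D$ the genuine limits give $f_{aj}(x)\to\psi(x)$ and $f_{bj}(x')\to\psi(x')$, while $\tfrac{aj}{N_j}\to\theta$ and $\tfrac{bj}{N_j}\to1-\theta$. Fixing $\epsilon>0$ and using monotonicity of $f_{N_j}$ to replace $w-\delta_j$ by $w-\epsilon$ for large $j$, then taking $\liminf_j$ (a subsequential liminf dominates the full liminf $\psi(w-\epsilon)$) and letting $\epsilon\downarrow0$, I obtain
\[
\psi(w^-)\ \le\ \theta\psi(x)+(1-\theta)\psi(x')\qquad(x,x'\in D,\ \theta\ \text{rational}).
\]

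Finally I would convert this into continuity and then convexity. To exclude jumps, suppose $\psi(w_0^-)<\psi(w_0^+)$ at some $w_0\in(0,1)$ and fix a rational $\theta$. For $x'\in D\cap(w_0,1)$ fixed and $x\in D$ with $x\uparrow w_0$ (taken close enough to $w_0$ that $w=\theta x+(1-\theta)x'>w_0$), monotonicity gives $\psi(w^-)\ge\psi(w_0^+)$, so the displayed bound forces $\psi(w_0^+)\le\theta\psi(x)+(1-\theta)\psi(x')\to\theta\psi(w_0^-)+(1-\theta)\psi(x')$; then letting $x'\downarrow w_0$ through $D$ gives $\psi(w_0^+)\le\theta\psi(w_0^-)+(1-\theta)\psi(w_0^+)$, i.e. $\psi(w_0^+)\le\psi(w_0^-)$, a contradiction. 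Hence $\psi$ is continuous on $(0,1)$, so $\psi(w^-)=\psi(w)$ and the displayed inequality becomes genuine convexity for $x,x'\in D$ and rational $\theta$; extending by continuity and the density of $D$ and of the rationals yields convexity on all of $(0,1)$. The main obstacle is the middle step: one must carry the multiplicative prefactor $\tfrac12\zeta^{d(nm)^{1/(d+1)}+d}$ and the additive loss $c(nm)^{1/(d+1)}$ through the normalization by $N_j$ simultaneously and confirm both vanish, which is exactly where $d\ge2$ enters; the passage from the left-limit inequality to genuine continuity via the iterated limit $x\uparrow w_0$ then $x'\downarrow w_0$ is the delicate point that actually rules out jumps and unlocks convexity.
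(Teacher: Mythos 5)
Your proof is correct and takes essentially the same route as the paper: the paper's own proof of Lemma \ref{dd} simply says to apply Lemma \ref{sub+} for convexity at continuity points and refers to \cite{Hama2}, Lemma $3$ for continuity, and your argument is a correctly detailed reconstruction of exactly that scheme (approximate convexity from (\ref{sub++}) at continuity points of $\psi$, where Proposition \ref{right cont} supplies the genuine limit, followed by the iterated-limit exclusion of jumps). Your identification of where $d\ge2$ enters --- the exponent $\frac{2}{d+1}-1<0$ making both the additive loss $c(nm)^{1/(d+1)}$ and the prefactor $\frac12\zeta^{d(nm)^{1/(d+1)}+d}$ negligible after normalization --- is precisely the mechanism behind the cited argument.
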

\begin{proof}
To prove the convexity on continuous points of $\psi$, we can apply Lemma \ref{sub+}. 
The proof that $\psi$ is continuous on $(0,1)$ is the same as in \cite{Hama2}, Lemma $3$. 
The details are omitted. 
\end{proof}
\begin{proof}[Proof of Theorem \ref{conv2}]
It is obvious that (\ref{h3}) holds. 
To prove that $\psi$ is continuous at $0$, 
note that  $\psi(x)=0$ for $x\le0$, while by (\ref{group}) there exists $a\in{\mathbb Z} \setminus \{0\}$ such that  for sufficiently small $\delta\in(0,1)$, 
 \begin{align*}
P(L_n\ge \delta n)\ge P(X_1=X_2,...., X_{\lceil \delta n\rceil}\neq 0) 
\ge [P(X_1(i)=a)]^{\lceil \delta n\rceil}.
 \end{align*} 
It follows that $\psi(\delta)\le -\delta \log P(X_1(i)=a)$ as in (\ref{g+}), 
hence, also $\lim_{\delta \to 0} \psi(\delta)=0$. 
Then, the proof that $\psi$ is continuous at $1$ is the same as in \cite{Hama2}, Proposition $4$, and combined with  Lemma \ref{right cont}  and (\ref{h4}) this continuity shows (\ref{h}).  

Now that we have continuity of $\psi$ on $[0,1]$,  we obtain the convexity of $\psi$ on $[0,1]$ from lemma \ref{dd}. 
We also have continuity of $\psi$ at $q$, so that also  (\ref{h1}) holds. 

We can show  that the right derivative at $\eta=0$  of $\lim_{n \to \infty} \frac{-1}{n}\log Ee^{\eta L_n}$ is $q$ by the argument given in \cite{Hama3}, hence 
we get  (\ref{h2}). 
Also, the proof of (\ref{h6}) is the same as in \cite{Hama2}, Proposition $4$.  
\end{proof}
\subsection{Proof of Theorem \ref{conv3}}
In this subsection we consider the simple random walk in $\mathbb{Z}^{2}$. 
We denote the neighbors of $0$ by $b_1,...,b_4$. 
In the following lemma,  $a_n \sim c_n$ means $\frac{a_n}{c_n} \to 1$ $(n\to \infty)$ for sequences $a_n$ and $c_n$,.
\begin{lem}\label{local}
For any $i$,
  \begin{align*}
P^{b_i}(\{S_m\}_{m=1}^n \cap \{0,b_{i}\} = \emptyset)
+P^{0}(\{S_m\}_{m=1}^n \cap \{0,b_{i}\} = \emptyset)
\sim \frac{\pi}{\log n}. 
 \end{align*}
In particular, by symmetry of the roles played by  $0$ and $b_i$, we have
  \begin{align*}
P^{b_i}(\{S_m\}_{m=1}^n \cap \{0,b_{i}\} = \emptyset)
=P^{0}(\{S_m\}_{m=1}^n \cap \{0,b_{i}\} = \emptyset) 
\sim \frac{\pi}{2\log n}. 
 \end{align*}
\end{lem}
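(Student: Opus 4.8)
The plan is to identify the event in question with a non-return probability for the two-point set $\{0,b_i\}$ and to read off its $1/\log n$ decay from a matrix renewal computation followed by a Tauberian theorem. Put $\tau=\min\{m\ge 1:\ S_m\in\{0,b_i\}\}$; then for either starting point the event $\{S_m\}_{m=1}^{n}\cap\{0,b_i\}=\emptyset$ is exactly $\{\tau>n\}$. The ``in particular'' statement is immediate: the point reflection $x\mapsto b_i-x$ is a lattice automorphism fixing $\{0,b_i\}$ and interchanging $0$ and $b_i$, and simple random walk is invariant under it, so $P^{b_i}(\tau>n)=P^0(\tau>n)$. Hence it suffices to show $P^0(\tau>n)\sim\frac{\pi}{2\log n}$, and the first display follows by adding the two equal terms.

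For $x,y\in\{0,b_i\}$ set $G_s(x,y)=\sum_{n\ge 0}P^x(S_n=y)s^n$ and $F_s(x,y)=\sum_{n\ge1}P^x(\tau=n,S_n=y)s^n$, and assemble them into $2\times2$ matrices $\mathbf G(s),\mathbf F(s)$. Splitting a path at its first visit to $\{0,b_i\}$ gives the renewal identity $\mathbf G(s)=I+\mathbf F(s)\mathbf G(s)$, so $\mathbf F(s)=I-\mathbf G(s)^{-1}$. I would control $\mathbf G$ through the potential kernel $\mathfrak a$: by translation invariance and symmetry $G_s(0,0)=G_s(b_i,b_i)=:g(s)$, while $G_s(0,b_i)=G_s(b_i,0)=g(s)-\mathfrak a(b_i)+o(1)$ as $s\to1^-$, with $\mathfrak a(b_i)=1$ (in fact only finiteness of $\mathfrak a(b_i)$ is needed). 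Inverting the resulting symmetric matrix and using $P^0(\tau>n)=\sum_{k>n}P^0(\tau=k)$ (recurrence in $d=2$) yields
\[
\sum_{n\ge0}P^0(\tau>n)\,s^n=\frac{1-\big(F_s(0,0)+F_s(0,b_i)\big)}{1-s}=\frac{1}{(1-s)\,2g(s)\,(1+o(1))},
\]
where the $o(1)$ is harmless because $g(s)\to\infty$.

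The analytic input is $g(s)\sim-\tfrac1\pi\log(1-s)$ as $s\to1^-$: from the local central limit theorem $P^0(S_{2k}=0)\sim\frac1{\pi k}$, so $g(s)=\sum_k P^0(S_{2k}=0)s^{2k}\sim-\frac1\pi\log(1-s^2)\sim-\frac1\pi\log(1-s)$ by an Abelian comparison. Substituting gives $\sum_n P^0(\tau>n)s^n\sim\frac{\pi}{2}\,\frac{1}{(1-s)\,|\log(1-s)|}$. Since $n\mapsto P^0(\tau>n)$ is non-increasing, the monotone form of the Karamata Tauberian theorem turns this into $P^0(\tau>n)\sim\frac{\pi}{2\log n}$, as required.

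The main obstacle is this final Tauberian passage. Because the slowly varying factor is $1/\log$, the tail asymptotics cannot be read off directly from the transform; one must exploit the monotonicity of $P^0(\tau>n)$ to upgrade the automatic partial-sum asymptotics $\sum_{k\le n}P^0(\tau>k)\sim\frac{\pi}{2}\,\frac{n}{\log n}$ to the pointwise statement. One should also check that the $o(1)$ corrections in $\mathbf G$ and the subleading constant $\mathfrak a(b_i)$ do not disturb the constant $\frac{\pi}{2}$, which they do not, since only the leading term $-\frac1\pi\log(1-s)$ of $g(s)$ survives both limits. The remaining ingredients — the renewal identity, the reflection symmetry, and the local-CLT estimate — are routine.
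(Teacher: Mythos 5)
Your proof is correct, but it takes a genuinely different route from the paper's. You work in transform space via a first-passage (renewal) decomposition: reduce to a single starting point by the reflection $x\mapsto b_i-x$, derive $\mathbf F(s)=I-\mathbf G(s)^{-1}$ from the $2\times2$ first-entrance identity, feed in the local CLT only through the Abelian asymptotic $g(s)\sim-\pi^{-1}\log(1-s)$, and finish with Karamata's Tauberian theorem in its monotone-density form to pass from $\sum_n P^0(\tau>n)s^n\sim\frac{\pi}{2}(1-s)^{-1}|\log(1-s)|^{-1}$ to the pointwise tail. The paper instead stays entirely on the $n$-side: it decomposes the constant $1$ according to the \emph{last} visit to $\{0,b_i\}$ within $2n$ steps (identity (\ref{local+})), inserts the same local CLT input (\ref{local2}), and uses monotonicity of $\gamma(n)$ twice --- once to pull $\gamma(n)$ out of the sum for the upper bound (\ref{form1}), and once, after truncating the last-visit time at $l=n-\lceil n/\log n\rceil$, for the lower bound (\ref{form2}). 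So the same monotonicity that you route through the Tauberian theorem is exploited by the paper directly inside a last-exit identity, in the Dvoretzky--Erd\H{o}s style; this avoids generating functions altogether and is exactly the ``direct (and hence simpler) proof'' advertised in the Remark following the lemma, whereas your argument is in essence a self-contained version of the Kesten--Spitzer route that the paper deliberately sidesteps. What your approach buys is more information (the full generating function of $P^0(\tau>n)$, with room for second-order terms and the explicit value $\mathfrak a(b_i)=1$); what the paper's buys is brevity and elementarity.

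Two points you should make explicit in a write-up, both routine: (i) invertibility of $\mathbf G(s)$ for $s\in(0,1)$, which follows since $G_s(0,b_i)=E[s^{T_{b_i}}]\,G_s(b_i,b_i)<G_s(0,0)$; and (ii) because the simple walk has period $2$, the series defining $\mathfrak a(b_i)$ converges only after pairing terms of opposite parity --- though, as you correctly note, all that is actually needed is $g(s)-G_s(0,b_i)=o\bigl(\log\tfrac{1}{1-s}\bigr)$, which makes the row sum of $\mathbf G(s)^{-1}$, namely $\bigl(G_s(0,0)+G_s(0,b_i)\bigr)^{-1}$, asymptotic to $1/(2g(s))$ and shows the precise constant $\mathfrak a(b_i)$ drops out of the leading term.
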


\begin{rem}
While this lemma has been already proven by using  Corollary $2$ and $(1.2)$ in \cite{SP2}, we give a direct (and hence simpler) proof. 
\end{rem}
 \begin{proof}
Let $$\gamma(n)=P^{b_i}(\{S_m\}_{m=1}^{2n} \cap \{0,b_{i}\} = \emptyset)
+P^{0}(\{S_m\}_{m=1}^{2n} \cap \{0,b_{i}\} = \emptyset).$$ 
If we consider the last return time to the set  $\{0, b_i\}$  in the first $2n$ steps, we get
  \begin{align}\notag
1 = &\sum_{k=0}^{n} P(S_{2k}=0)P^{0}(\{S_m\}_{m=1}^{2n-2k}\cap \{0,b_{i}\} = \emptyset )\\
\label{local+}
+&\sum_{k=0}^{n-1} P(S_{2k+1}=b_i)P^{b_i}(\{S_m\}_{m=1}^{2n-2k-1}\cap \{0,b_{i}\} = \emptyset).
 \end{align}
We first show the upper bound.   
By local central limit theorem (cf., for example, Theorem $1.2.1$ in \cite{Law}), it holds that for each $i$,
 \begin{align}\label{local2}
P(S_{2k} = 0) \sim \frac{1}{\pi k},\quad P(S_{2k+1} = b_i) \sim \frac{1}{\pi k}, \quad \text{when }k\to \infty.
 \end{align}
So we can rewrite (\ref{local+}) as
  \begin{align*}
1 = &P^0(\{S_m\}_{m=1}^{2n}\cap \{0,b_{i}\} = \emptyset)\\
+&\sum_{k=1}^{n} \frac{1}{\pi k}P^0(\{S_m\}_{m=1}^{2n-2k}\cap \{0,b_{i}\} = \emptyset)(1+o(1))\\
+&P^{b_i}(\{S_m\}_{m=1}^{2n-1}\cap \{0,b_{i}\} = \emptyset)\\
+&\sum_{k=1}^{n-1} \frac{1}{\pi k}P^{b_i}(\{S_m\}_{m=1}^{2n-2k-1}\cap \{0,b_{i}\} = \emptyset)(1+o(1)).
 \end{align*}
Since $\gamma(n)$ is nonincreasing, it holds that 
 \begin{align*}
1 \ge\gamma(n)+ \sum_{k=1}^{n} \frac{1}{\pi k}\gamma(n)(1+o(1)).
 \end{align*}
So we get 
  \begin{align}\label{form1}
\gamma(n) \le \frac{\pi}{\log n}(1+o(1)).
 \end{align}
Next we show the lower bound. 
For any $0\le l \le n$, it holds that
 \begin{align*}
1 \le 
&\sum_{k=0}^{l} P(S_{2k}=0)P^{0}(\{S_m\}_{m=1}^{2n-2k}\cap \{0,b_{i}\} = \emptyset)\\
+&\sum_{k=0}^{l} P(S_{2k+1}=b_i)P^{b_i}(\{S_m\}_{m=1}^{2n-2k-1}\cap \{0,b_{i}\} = \emptyset)\\
+&\sum_{k=l+1}^{n} P(S_{2k}=0)
+\sum_{k=l+1}^{n-1} P(S_{2k+1}=b_i)\\
  \le 
&\sum_{k=0}^{l} P(S_{2k}=0)P^{0}(\{S_m\}_{m=1}^{2n-2l}\cap \{0,b_{i}\} = \emptyset)\\
+&\sum_{k=0}^{l} P(S_{2k+1}=b_i)P^{b_i}(\{S_m\}_{m=1}^{2n-2l-1}\cap \{0,b_{i}\} = \emptyset)\\
+&\sum_{k=l+1}^{n} P(S_{2k}=0)
+\sum_{k=l+1}^{n-1} P(S_{2k+1}=b_i).
 \end{align*}
Again by(\ref{local2}), it holds that
  \begin{align*}
1 \le \gamma(n-l+1) \frac{\log n}{\pi}(1+o(1))
+\frac{2}{\pi} \log\frac{n}{l}(1+o(1)).
 \end{align*}
If we pick $l=n-\lceil \frac{n}{\log n}\rceil$, 
it holds that
  \begin{align*}
1 \le \gamma(\lceil \frac{n}{\log n}\rceil+1) \frac{\log n}{\pi}(1+o(1)) +O(1/\log n).
 \end{align*}
So we get 
  \begin{align}\label{form2}
\gamma(\lceil \frac{n}{\log n}\rceil+1) \ge \frac{\pi}{\log n}(1+o(1)).
 \end{align}
By (\ref{form1}) and (\ref{form2}) we get the result. 
\end{proof}

\begin{proof}[Proof of Theorem \ref{conv3}]
We write $L_n$ as
 \begin{align*}
L_n = \sum_{k=0}^{n} 1_{C_{k,n}}(\omega),
 \end{align*}
where $C_{k,n}$ is the event that 
$\{S_m\}_{m=0}^n \not\supset {\cal N}(S_k)$ and $ S_k\notin\{S_{m}\}_{m=k+1}^n$. 
If we denote by $C'_{k,n,j}$ the event that 
$S_k+b_j\notin\{S_m\}_{m=0}^{k-1}$ and $\{S_m\}_{m=k+1}^{n}\cap \{S_k,S_k+b_j\} = \emptyset$, 
we find that
 \begin{align}\label{j1}
C_{k,n}=\bigcup_{i=1}^4 C'_{k,n,j}.
 \end{align}
So we get
 \begin{align}\label{form3}
P(C'_{k,n,j})\le P(C_{k,n})\le \sum_{j=1}^4P(C'_{k,n,j}).
 \end{align}
It holds that
 \begin{align*}
P(C'_{k,n,j})=&P(S_k+b_j\notin\{S_m\}_{m=0}^{k-1})
\times P(\{S_m\}_{m=k+1}^{n}\cap \{S_k,S_k+b_j\} = \emptyset)\\
=&P(b_j\notin\{S_m\}_{m=1}^{k})
\times P(\{S_m\}_{m=1}^{n-k}\cap \{0,b_j\} = \emptyset)
 \end{align*} 
Note that  by \cite{Dvo1} it holds that 
 \begin{align}\label{th}
P(b_j\notin\{S_m\}_{m=1}^{k})
=P(0\notin\{S_m\}_{m=0}^{k+1})
\sim\frac{\pi}{\log k}. 
 \end{align} 
Therefore, by summing over $k$ in (\ref{form3}) with the help of Lemma \ref{local} and (\ref{th}) we get  (\ref{s1}), provided that the limit in it exists. 
Next we show (\ref{s2}) (in the same sense as for (9)). 
If $l=\{l_i\}_{i=1}^p$  and $D_{l,n}$ is the event that 
$\{S_m\}_{m=0}^{n} \not\supset {\cal N}( S_{l_1})$, $S_{l_1}=S_{l_2}=...=S_{l_p}$ 
and $S_{l_1}\not\in (\{S_m\}_{m=0}^{n}-\{S_{l_i}\}_{i=1}^p)$,
then  it holds that 
 \begin{align*}
J_n^{(p)}=\sum_{l_1=0}^{n} \sum_{l_2=l_1+2}^{n}...\sum_{l_p=l_{p-1}+2}^n 1_{D_{l,n}}(\omega).
 \end{align*}
If $D'_{l,n,j}$ denotes  the event that 
 $$S_{l_1}=S_{l_2}=...=S_{l_p}\quad 
\mbox{and}\quad 
  (\{S_m\}_{m=0}^{n}-\{S_{l_i}\}_{i=1}^p)\cap \{S_{l_1},S_{l_1}+b_j \} = \emptyset,$$ 
then 
 \begin{align}\label{j2}
D_{l,n}=\bigcup_{j=1}^4 D'_{l,n,j}.
 \end{align}
So we get
 \begin{align}\label{form4}
P(D'_{l,n,j})\le P(D_{l,n})\le \sum_{j=1}^4P(D'_{l,n,j}).
 \end{align}
It holds that 
 \begin{align*}
&P(D'_{l,n,j})
\\=&P(\{S_m\}_{m=0}^{l_1-1}\cap \{S_{l_1},S_{l_1}+b_j \} = \emptyset)\\
\times &P(\{S_m\}_{m=l_1+1}\text { firstly hit }S_{l_1} \text{ at the time }l_2
\text{ and }S_{l_1}+b_j\notin\{S_m\}_{m=l_1+1}^{l_2})\\
\times&...\times P(\{S_m\}_{m=l_{p-1}+1}\text { firstly hit }S_{l_{p-1}} \text{ at the time }l_p
\text{ and }S_{l_{p-1}}+b_j\notin\{S_m\}_{m=l_{p-1}+1}^{l_p})\\
\times &P( \{S_m\}_{m=l_p+1}^{n}\cap \{S_{l_p},S_{l_p}+b_j\} = \emptyset)\\
=&P(\{S_m\}_{m=1}^{l_1}\cap \{0,b_j\} = \emptyset)\\
\times &P(\{S_m\}_{m=1}\text { firstly hit }0 \text{ at the time }l_2-l_1
\text{ and }b_j\notin\{S_m\}_{m=1}^{l_2-l_1})\\
\times&...\times P(\{S_m\}_{m=1}\text { firstly hit }0 \text{ at the time }l_p-l_{p-1}
\text{ and }b_j\notin\{S_m\}_{m=1}^{l_p-l_{p-1}} )\\
\times &P(\{S_m\}_{m=1}^{n-l_p} \cap \{0,b_j\} = \emptyset).
 \end{align*}
 We compute the upper bound of (\ref{s2}).  
Summing over $l_2,...,l_p$ in (\ref{form4}) we get
 \begin{align}\notag
&\sum_{l_2=l_1+2}^{l_1+\lceil \frac{n}{\log n} \rceil}...\sum_{l_p=l_{p-1}+2}^{l_{p-1}+\lceil \frac{n}{\log n} \rceil} P(D'_{l,n,j})\\
\notag
\le&P(\{S_m\}_{m=1}^{l_1}\cap \{0,b_j\} = \emptyset)
\times P(0\in\{S_m\}_{m=1}^{\infty},b_j\notin\{S_m\}_{m=1}^{T_0} )\\
\notag
\times&...\times P(0\in\{S_m\}_{m=1}^{\infty},b_j\notin\{S_m\}_{m=1}^{T_0})\\
\label{k1}
\times &P(\{S_m\}_{m=1}^{n-(p-1)\lceil \frac{n}{\log n} \rceil-l_1}\cap \{0,b_j\} = \emptyset).
 \end{align}
It is shown in [3] (see (2.4) of it) that 
 \begin{align*}
&P(\lceil \frac{n}{\log n} \rceil<T_0\le n)\\
=&P(T_0>\frac{n}{\log n})-P(T_0> n)\\
\le &(\frac{\pi}{\log n-\log \log n}+\frac{C\log \log n}{(\log n-\log \log n)^2})-(\frac{\pi}{\log n}-\frac{C\log \log n}{(\log n)^2})
\le \frac{C'\log \log n}{(\log n)^2} ,
 \end{align*}
for some constants $C$ and  $C'$. 
Hence  we can obtain  the bound 
 \begin{align}\notag
&\sum_{l_2=l_1+2}^{l_1+n}...\sum_{l_p=l_{p-1}+2}^{l_{p-1}+n} P(D'_{l,n,j})-
\sum_{l_2=l_1+2}^{l_1+\lceil \frac{n}{\log n} \rceil}...\sum_{l_p=l_{p-1}+2}^{l_{p-1}+\lceil \frac{n}{\log n} \rceil} P(D'_{l,n,j})\\
\notag
=&\sum_{v=2}^p\sum_{l_2=l_1+2}^{n}...\sum_{l_v=l_{v-1}+\lceil \frac{n}{\log n} \rceil+1}^{n}...\sum_{l_p=l_{p-1}+2}^{n} P(D'_{l,n,j})\\
\label{k1+}
\le &(p-1)P(\{S_m\}_{m=1}^{l_1}\cap \{0,b_j\} = \emptyset)
\times P(\lceil \frac{n}{\log n} \rceil<T_0\le n).
 \end{align}
Summing  over $l_1$ in (\ref{k1}) and (\ref{k1+}) with the help of   Lemma \ref{local}  and (\ref{th}) we get the upper bound of (\ref{s2}). 

To compute the lower bound of (\ref{s2}),  
for $\epsilon>0$ pick $s<\infty$ such that $P(T_0<T_{b_j}, T_0<s)>P(T_0<T_{b_j})-\epsilon$. 
It holds that for $n\ge(p-1)s$
  \begin{align*}
&\sum_{l_2=l_1+2}^{n}...\sum_{l_p=l_{p-1}+2}^n P(D'_{l,n,j})\\
\ge&P(\{S_m\}_{m=1}^{l_1}\cap \{0,b_j\} = \emptyset)\\
\times &P(0\in\{S_m\}_{m=1}^{s},b_j\notin\{S_m\}_{m=1}^{T_0})
\times...\times P(0\in\{S_m\}_{m=1}^{s},b_j\notin\{S_m\}_{m=1}^{T_0})\\
\times &P(\{S_m\}_{m=1}^{n-(p-1)s-l_1}\cap \{0,b_j\} = \emptyset)\\
\ge &P(\{S_m\}_{m=1}^{l_1}\cap \{0,b_j\} = \emptyset)
\times (\tilde{c}-\epsilon)^{p-1} 
\times P(\{S_m\}_{m=1}^{n-l_1}\cap \{0,b_j\} = \emptyset) .
 \end{align*}
Therefore, by summing over $l_1$, by Lemma \ref{local} we get the lower bound of (\ref{s2}). 

Also,  if we set $l=\{l_i\}_{i=1}^p$, then 
 \begin{align*}
J_n^p=\sum_{l_1=0}^{n} \sum_{l_2=l_1+2}^{n}...\sum_{l_p=l_{p-1}+2}^n 1_{E_{l,n}}(\omega),
 \end{align*}
where $E_{l,n}$ is the event that 
$\{S_m\}_{m=0}^{n} \not\supset {\cal N} (S_{l_1})$, $S_{l_1}=S_{l_2}=...=S_{l_p}$ 
and $S_{l_1}\not\in (\{S_m\}_{m=l_1+1}^{n}-\{S_{l_i}\}_{i=2}^p)$. 
So we can verify  (\ref{s3}) by the argument given for  (\ref{s2}). 
We finally prove the existence of the limits. 
 \cite{SP2}, Theorem $2$ tells us that for any $a\in {\mathbb{Z}^2}$, 
$\lim_{n\to \infty}P(a\notin \{S_i\}_{i=1}^n)\times (\log n)$ exists. 
Since by applying inclusion-exclusion formula (e.g., \cite{Dur}, Exercise $1.6.9$) 
(\ref{j1}) can be divided, it holds that 
  \begin{align*}
\lim_{n\to \infty}EL_n\times\frac{(\log n)^2}{n} \quad \text{exists.}
 \end{align*}
Also, by the same argument  it is easy to see that
 \begin{align*}
\lim_{n\to \infty}EJ_n^{(p)}\times\frac{(\log n)^2}{n}, \quad
\lim_{n\to \infty}EJ_n^p\times\frac{(\log n)^2}{n} \quad \text{exist.}
 \end{align*}
\end{proof}

(Izumi Okada) 

Department of Mathematics, Tokyo Institute of Technology, 
Tokyo, 152-8550, Japan.

okada.i.aa@m.titech.ac.jp
\end{document}